\newtheorem*{Lem}{Lemma}
\newtheorem*{Def}{Definition}
\newtheorem*{Prop}{Proposition}
\newtheorem*{Cor}{Corollary}
\newtheorem*{ThmA}{Theorem A}
\newtheorem*{ThmB}{Theorem B}
\numberwithin{equation}{subsection}
\theoremstyle{remark}
\newtheorem*{Rmk}{Remark}
\newcommand{\uv}{{\rm U}(V)}
\newcommand{\guv}{{\rm GU}(V)}
\newcommand{\ep}{\epsilon}
\newcommand{\A}{\mathcal{A}}
\newcommand{\p}{\mathfrak{p}}
\newcommand{\et}{E[T]}
\newcommand{\etl}{E[T,T^{-1}]}
\newcommand{\vform}{\langle\mspace{7mu},\mspace{7mu}\rangle}
\newcommand{\tp}{{}^{\top}}
\newcommand{\pic}{\pi^{\vee}}
\newcommand{\pia}{\pi^{\alpha}}
\newcommand{\pii}{\pi^{\iota}}
\newcommand{\pit}{\pi^{\theta}}
\newcommand{\ua}{{}^{\alpha}}
\newcommand{\ui}{{}^{\iota}}
\newcommand{\ut}{{}^{\theta}}
\newcommand{\cx}{\mathbb{C}}
\newcommand{\tr}{{\rm tr}}
\newcommand{\fc}{f^{\vee}}
\newcommand{\fo}{\mathfrak{o}_F}
\newcommand{\fp}{\mathfrak{p}_F}
\newcommand{\nrd}{{\rm Nrd}}
\newcommand{\dx}{D^\times}
\newcommand{\dop}{D^{\texttt{o}}}
\newcommand{\fx}{F^\times}
\newcommand{\ompi}{\omega_\pi}
\newcommand{\od}{\mathfrak{O}}
\newcommand{\pd}{\mathfrak{q}}
\begin{document}

\thanks{2010 {\em Mathematics Subject Classification.} 20G15, 22E50.}
\thanks{}

\title{A factorization result for classical and similitude groups}

\author[A. Roche]{Alan Roche}
\address{Dept. of Mathematics, University of Oklahoma, Norman OK 73019-3103.}
\email{aroche@math.ou.edu}
\author[C.~Ryan Vinroot]{C.~Ryan Vinroot}
\address{Dept. of Mathematics, College of William and Mary, P.O. 8795,  Williamsburg, VA 23187-8795.}
\email{vinroot@math.wm.edu}


\begin{abstract}
For most classical and similitude groups, we show that each element can be written as a product of two transformations that 
a) preserve or almost preserve the underlying form and b) whose 
squares are certain scalar maps. This generalizes work of Wonenburger and Vinroot.   
As an application, we re-prove and slightly extend a well-known result of M{\oe}glin, Vign\'{e}ras and Waldspurger  on the existence of automorphisms of  $p$-adic classical groups that take each irreducible smooth representation to its dual. 
\end{abstract}

\maketitle 

\section*{Introduction} 
For many classical groups $G$, we show that each element is a product of two involutions. 
The involutions belong to a group  $\widetilde{G}$ containing $G$ such that  $[\widetilde{G}:G] \leq 2$. 
We also prove a similar factorization for elements of the corresponding similitude groups. 
Our interest in such factorizations stems from an application to the representation theory of reductive groups over 
non-archimedean local fields. We are interested in involutary automorphisms of such groups that take each irreducible smooth representation to its dual. Echoing \cite{Ad}, we call these {\it dualizing} involutions. They do not always exist in our setting (we give an example in \S\ref{non-existence}). They do exist, however, for many classical $p$-adic groups by a result of  M{\oe}glin, Vign\'{e}ras and Waldspurger 
 (\cite{MoViWa87} Chap.~IV \S~II). We re-prove this result and slightly extend its scope as explained below. 

To make more precise statements, we need to define the classical and similitude groups we consider. 
Let $E/F$ be a field extension with $E=F$ or $[E:F]=2$. We assume in the quadratic case that $E/F$ is a Galois extension. In all cases
we write $\tau$ for the generator of  ${\rm Gal} (E/F)$, so that $\tau$ has order two when $[E:F] = 2$ and  $\tau = 1$ when $E=F$. 
Let $V$ be a finite dimensional vector space over $E$ with a non-degenerate $\ep$-hermitian form $\vform$ ($\ep = \pm 1$)
which we take to be linear in the first variable. Thus 
\[
       \langle \alpha u +  \beta v, w \rangle =   \alpha \langle u, w  \rangle + \beta \langle  v,w \rangle \,\,\,\text{and} \,\,\,
          \langle v, w \rangle = \ep \, \tau ( \langle w, v \rangle )          
       \]
for all $\alpha, \beta \in E$ and $ u, v, w \in V$. It follows that $\vform$ is $\tau$-linear in the second variable: 
\[
       \langle u,  \alpha v +  \beta w \rangle =   \tau(\alpha)  \langle u, v  \rangle + \tau(\beta)  \langle  u , w \rangle.
       \] 
 In the case $\text{char} \,F = 2$ and $E=F$ we assume also that 
$\langle v, v \rangle = 0$ for all $v \in V$, that is, $\vform$ is symplectic.

We write $\uv$ for the isometry group (or unitary group) of $\vform$ 
and 
$\guv$ for the corresponding similitude group. That is, 
\begin{align*} 
   \uv  &= \{ g \in {\rm Aut}_E (V) :  \langle g v, g v' \rangle = \langle v, v' \rangle, \,\,\, \forall \,\, v, v' \in V \}, \\
      \guv  &= \{ g \in {\rm Aut}_E (V) :  \langle g v, g v' \rangle =  \beta \langle v, v' \rangle, \, \text{for some scalar $\beta$}, 
 \,  \forall \,\, v, v' \in V \}.
\end{align*} 
For $g \in \guv$, applying $\tau$ to both sides of $ \langle g v, g v' \rangle =  \beta \langle v, v' \rangle$ gives $\tau(\beta) = \beta$,  so that 
$\beta \in F^\times$.
We write $\mu(g) = \beta$. It is called the {\it multiplier} of $g$ and the resulting 
homomorphism $\mu:\guv \to F^\times$ is the {\it multiplier} map.  
\begin{Def}
{\rm Let $h \in {\rm Aut}_F(V)$. 
We say that $h$ is {\it anti-unitary} if  
\[
           \langle h v, h v' \rangle    =      \langle   v', v \rangle, \quad \quad \forall \,\, v, v' \in V.
           \]
}
 \end{Def}        
\noindent   
When  $E=F$ and $\text{char} \,F \neq 2$,  the form $\vform$ is orthogonal ($\ep = 1$) or symplectic ($\ep = -1$). 
In the orthogonal case, an anti-unitary map is simply an orthogonal transformation. 
In the symplectic case, an anti-unitary map is a skew-symplectic transformation 
($\langle h v , h v' \rangle = - \langle v, v' \rangle$).  

We also need the corresponding notion for similitude groups. 
\begin{Def}
{\rm Let $h \in {\rm Aut}_F(V)$. 
We say also that $h$ is an {\it anti-unitary similitude} if, for some scalar $\beta$,            
\[
\langle h v, h v' \rangle    =  \beta     \langle   v', v \rangle, \quad \quad \forall \,\, v, v' \in V. 
 \]    
}
\end{Def}
\noindent 
Thus an anti-unitary map is an anti-unitary similitude for which $\beta  = 1$. 

We can now state our factorization result. 
\begin{ThmA}
Let $g \in \guv$ with $\mu(g) = \beta$. Then there is an anti-unitary involution $h_1$ 
and an anti-unitary similitude $h_2$ with $h_2^2 = \beta$ such that $g = h_1 h_2$.
In particular, for any $g \in \uv$, there exist anti-unitary
elements $h_i$ with $h_i^2 = 1$ (for $i=1,2$) such that $g = h_1 h_2$.  
\end{ThmA}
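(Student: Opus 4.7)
The plan follows the Wonenburger--Vinroot strategy, adapted to the similitude and $\ep$-hermitian setting: decompose $V$ into orthogonal $g$-stable pieces on which the combined structure is indecomposable, then construct $h_1, h_2$ explicitly on each piece.

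\textbf{Step 1 (form-adjoint and primary decomposition).} Define the form-adjoint $g \mapsto g^*$ on $\mathrm{End}_E(V)$ by $\langle gv,w\rangle = \langle v,g^*w\rangle$. A direct calculation shows that $g \in \guv$ with $\mu(g)=\beta$ if and only if $g^* = \beta g^{-1}$; in particular $g^* \in E[g]$ and commutes with $g$. For each monic irreducible $f \in E[T]$ dividing the minimal polynomial of $g$, define $f^\sharp$ to be the normalized polynomial whose roots are the $\beta/\tau(\alpha)$ as $\alpha$ runs over the roots of $f$; the map $f \mapsto f^\sharp$ is an involution on irreducible factors. One verifies that $\vform$ pairs the generalized $f$-eigenspace $V_f$ non-degenerately with $V_{f^\sharp}$ and annihilates all other cross-pairings. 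Regrouping yields an orthogonal $g$-stable decomposition $V = \bigoplus_i W_i$ into \emph{hyperbolic} pieces (arising from pairs $\{f,f^\sharp\}$ with $f \ne f^\sharp$) and \emph{self-dual primary} pieces ($f = f^\sharp$), reducing the problem to producing the factorization on each $W_i$.

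\textbf{Step 2 (hyperbolic pieces).} Write $W_i = W' \oplus W''$ with $W', W''$ totally isotropic $g$-stable subspaces paired non-degenerately by $\vform$. Set $A = g|_{W'}$; the multiplier condition forces $g|_{W''} = \beta (A^\vee)^{-1}$, where $A^\vee$ is the adjoint via the pairing. Using the pairing to identify $W''$ with (a $\tau$-twist of) the dual of $W'$, one obtains an $F$-linear bijection $\Phi : W' \to W''$ that intertwines $A$ with $A^\vee$ and is form-compatible. Set
\[
  h_2(v' + v'') := \Phi^{-1} v'' + \beta\, \Phi v', \qquad h_1 := g h_2^{-1}.
\]
A direct block-matrix calculation then gives $h_2^2 = \beta$, $h_2$ an anti-unitary similitude of multiplier $\beta$, and $h_1$ an anti-unitary involution.

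\textbf{Step 3 (self-dual primary pieces).} On a $W_i$ with $f = f^\sharp$, further decompose into cyclic $E[g]$-submodules compatible with $\vform$ (possibly pairing isomorphic cyclic summands when no single summand carries a non-degenerate restriction of the form). On each resulting block, identify everything with an explicit model in $E[T]/(f^n)$; the form, involutions, and similitudes then correspond to specific elements satisfying transparent polynomial relations. Produce $h_2$ by an explicit ansatz---essentially a square root of an element related to $\beta$ inside a twisted version of this algebra---set $h_1 := g h_2^{-1}$, and verify the required involution, similitude, and anti-unitarity conditions by polynomial arithmetic. Summing over the $W_i$ yields $g = h_1 h_2$ on $V$; the parenthetical assertion is the special case $\beta = 1$.

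\textbf{Main obstacle.} Step 3 is where the real work lies. On a single cyclic block the required square root may fail to exist, forcing one to pair cyclic summands of identical elementary-divisor type and use the hyperbolic ``swap'' construction within the self-dual component. Several boundary cases---in particular $f(T) = T \pm \lambda$ with $\lambda^2 = \beta$, and characteristic-$2$ symplectic ($E = F$, $\ep = -1$)---require additional, more delicate verification, since there the form-adjoint structure degenerates and the standard cyclic-algebra formulas need to be replaced.
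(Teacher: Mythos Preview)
Your overall architecture---primary decomposition paired by the form-adjoint involution, hyperbolic versus self-dual pieces, and within the self-dual case a cyclic/paired-cyclic dichotomy---is exactly the paper's, and Steps~1 and~2 line up with its \S\S1--2 (the hyperbolic case is handled there via the classical fact that any matrix is conjugate to its transpose by a symmetric matrix, which is precisely the existence of your intertwiner $\Phi$).

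Step~3 is where you miss the key device, and your diagnosis of the obstacle is off. On a non-degenerate cyclic block $V \cong \A = \etl/(p^e)$ the paper does \emph{not} search for a square root. It transports the ring involution $\tau$ on $\etl$ (Galois on coefficients, $T \mapsto \beta T^{-1}$) to $V$ via $a \mapsto av$, obtaining an $F$-linear $t$. The adjoint identity $\langle v, fw\rangle = \langle \tau(f) v, w\rangle$ makes $t$ anti-unitary for free, $t^2 = 1$ is immediate since $\tau^2 = 1$, and $\tau(T) = \beta T^{-1}$ gives $tg = \beta g^{-1} t$, hence $(tg)^2 = \beta$. Thus $g = t \cdot (tg)$ with \emph{no} square-root obstruction, \emph{no} special treatment of linear $f(T) = T \pm \lambda$, and \emph{no} characteristic-$2$ subtlety. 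The paired-cyclic subcase arises not because a square root can fail, but because a cyclic $\A$-summand can be \emph{degenerate} for $\vform$; the paper then isolates a non-degenerate $\A x \oplus \A y$ and sets $t(ax + by) = \tau(a)x + \tau(b)\gamma y$ for a $\gamma \in \A^\times$ with $\gamma\,\tau(\gamma) = 1$, again uniformly. So your ``square-root ansatz'' is vaguer and harder than what is needed, and the boundary cases you flag as the main obstacle disappear once you construct $h_1$ first, as the transported involution $\tau$, rather than $h_2$.
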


For example, Theorem~A says that any orthogonal transformation is a product of two orthogonal involutions and that 
any symplectic transformation is a product of two skew-symplectic involutions.  This was 
originally proved by Wonenburger \cite{Wo}  (under the assumption $\text{char}\,F \neq 2$). 
While  we ultimately obtain a new proof of her results, we borrow heavily from her approach. 
In particular, the arguments in \S \ref{Case II} below are in essence those of \cite{Wo} but rephrased in the language of modules. 
For $E=F$ and $\text{char}\,F \neq 2$, Theorem~A is due in the case of similitude groups to Vinroot \cite{ryan-gsp, ryan-go} 
(by an adaptation of Wonenburger's arguments).

Our framework does not accommodate orthogonal groups in even characteristic
(defined as the stabilizers of suitably non-degenerate quadratic forms) or the corresponding 
similitude groups. If $F$ is perfect, then it follows readily from work of Gow \cite{Gow} or Ellers and Nolte \cite{EN} that 
Theorem~A continues to hold in this setting.

Suppose now that  $F$ is a non-archimedean local field and that $G$ is the 
group of $F$-points of a reductive $F$-group. Let $\pi$ be an 
irreducible smooth representation of $G$. For any continuous automorphism $\alpha$ of $G$, 
we write $\pia$ for the (smooth) representation of $G$ given by $\pia (g) = \pi (\ua g)$ for $g \in G$.  
We write $\pic$ for the smooth dual or contragredient of $\pi$. 

\begin{Def}
{\rm Let $\iota$ be a  continuous automorphism of $G$ of order at most two. We say that 
$\iota$ a {\it dualizing involution}
of $G$ if $\pii \cong \pic$ for all irreducible smooth representations $\pi$ of $G$.}
\end{Def}

We fix an anti-unitary involution $h \in {\rm Aut}_F(V)$ and set $\ui g = \mu(g)^{-1} hgh^{-1}$ for $g \in \guv$. Then $\iota$ defines a continuous
automorphism of $\guv$ of order two. Further $\iota |_{ \uv}$ gives the automorphism  $g \mapsto hgh^{-1}$ of $\uv$ 
which for simplicity we again denote by $\iota$.  Our application of Theorem~A hinges on the following immediate consequence. 
\begin{Cor}
For any $g \in \guv$, the elements $\ui g$ and $g^{-1}$ are conjugate by an element of $\uv$.
\end{Cor}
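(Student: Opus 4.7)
My plan is to apply Theorem~A to factor $g = h_1 h_2$, with $h_1$ an anti-unitary involution and $h_2$ an anti-unitary similitude satisfying $h_2^2 = \beta$, and then to build the conjugator from $h_1$ together with the fixed anti-unitary involution $h$ used to define $\iota$. The candidate conjugator I would try is $u := h_1 h$.

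A preliminary observation is that the product of any two anti-unitary maps lies in $\uv$: applying the defining identity $\langle kv, kv'\rangle = \langle v', v\rangle$ twice gives $\langle h_1 h v, h_1 h v'\rangle = \langle v, v'\rangle$, so indeed $u \in \uv$. For the conjugation identity $u (\ui g) u^{-1} = g^{-1}$, I would unpack $\ui g = \beta^{-1} h g h^{-1}$, conjugate by $u$, and use $h^2 = 1 = h_1^2$ to collapse the outer factors of $h$. This reduces matters to checking $\beta^{-1} h_1 g h_1 = g^{-1}$. Substituting $g = h_1 h_2$ yields $\beta^{-1} h_2 h_1$ on the left, while $g^{-1} = h_2^{-1} h_1 = \beta^{-1} h_2 h_1$ on the right, using $h_2^{-1} = \beta^{-1} h_2$ (which follows from $h_2^2 = \beta$).

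Given Theorem~A, there is no real obstacle in the corollary itself: the only mild piece of creativity is choosing the right conjugator $u = h_1 h$, after which the verification is a short and mechanical calculation. This is in line with the authors' description of the corollary as an ``immediate consequence'' of Theorem~A; all the substance lies in Theorem~A, not in passing from it to the corollary.
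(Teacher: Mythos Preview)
Your proof is correct and follows exactly the same approach as the paper: both take the conjugator $u = h_1 h$, use $h^2 = h_1^2 = 1$ to reduce to $\beta^{-1} h_1 g h_1$, and then invoke $h_2^{-1} = \beta^{-1} h_2$ to identify this with $g^{-1}$. Your write-up even makes explicit the reason $h_1 h \in \uv$ (the composite of two anti-unitary maps is unitary), which the paper leaves to the reader.
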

\begin{proof}
Let $a \in \guv$ with $\mu(g) = \beta$.  
By Theorem~A, we have $g = h_1 h_2$ for an anti-unitary involution $h_1$ and an anti-unitary similitude $h_2$ with $h_2^2 = \beta$. 
Thus $h_2^{-1} = \beta^{-1} h_2$ and $g^{-1} = \beta^{-1} h_2 h_1$. Hence 
\begin{align*} 
     (h_1 h )\, \ui g \, (h_1 h)^{-1}  &=  h_1 h \, ( \beta^{-1} h (h_1 h_2) \, h^{-1} ) \, h h_1  \\  
                                            &=  \beta^{-1} h_2 h_1 \\ 
                                               &= g^{-1}.
                                                \end{align*}
That is, $\ui g$ and  $g^{-1}$ are conjugate by $h_1 h \in \uv$. 
\end{proof}
For the classical groups $\uv$, this is part of \cite{MoViWa87}  Chap.\,4  Prop.\,I.2 and the early part of our proof of Theorem~A mirrors the 
corresponding part of the proof in {\it loc.~cit.} Our overall proof, on the other hand, can easily be adapted to give an alternative route to the full statement in {\it loc.~cit.}

Our main result is the following. 
\begin{ThmB}  
The maps $\iota:\uv \to \uv$ and $\iota:\guv \to \guv$ are dualizing involutions. 
\end{ThmB}
In the case of the classical groups $\uv$, this is essentially \cite{MoViWa87} Chap.~IV  Th\'{e}or\`{e}me~II.1. 
Given Harish-Chandra's theory of characters \cite{H-C, Ad-Kor} as recalled in \S\ref{main result}, Theorem~B is an immediate consequence 
of the Corollary. 

The argument in  \cite{MoViWa87} does not rely on existence of characters. Instead it adapts a geometric method used by  
Gelfand and Kazhdan to show that transpose-inverse is a dualizing involution of ${\rm GL}_n(F)$ \cite{GK}.  As above, 
Gelfand and Kazhdan's result follows immediately from the existence of characters. Indeed, by elementary linear algebra, 
any square matrix is conjugate to its transpose. It follows that if $\ut g = {}^{\top} g^{-1}$ for $g \in  {\rm GL}_n(F)$ then, 
for any irreducible smooth representation $\pi$, the characters of $\pit$ and $\pic$ are equal, whence $\pit \cong \pic$. 
Tupan found a clever and completely elementary proof of Gelfand and Kazhdan's result \cite{Tupan}.
We will report in a sequel  on a similarly elementary proof of Theorem~B that builds on 
Tupan's approach \cite{RV}.

Finally, let $G$ be the isometry group of a non-degenerate hermitian or anti-hermitian form over a $p$-adic quaternion algebra.  
By  \cite{LinSunTan}, there is no automorphism $\theta$ of $G$ such that ${}^{\theta} g$ is 
conjugate to $g^{-1}$ for all $g \in G$. Thus the Corollary above is false in this setting which means surely that 
Theorem~B does not extend to classical groups over $p$-adic quaternion algebras.  In this spirit, let $D$ be a central (finite-dimensional) 
division algebra over $F$. 
By a straightforward argument involving only central characters, due to the first-named author and Steven Spallone, 
the group $\mathrm{GL}_n(D)$ can admit an 
automorphism that takes each irreducible smooth representation to its dual only in the known cases $D=F$ and when $D$ is a quaternion 
algebra over $F$ \cite{MS, Ragh}. In particular, in contrast to the case of connected reductive groups over the reals 
\cite{Ad}, dualizing involutions in our sense do not always exist. 

{\it Organization.}  The proof of Theorem~A takes up  \S\S1 through 5. We record some special cases and applications in \S6.  
In \S7 we briefly recall some character theory and prove Theorem~B. Finally in \S8 we show that the unit groups of 
(finite-dimensional) central simple algebras over $F$ do not admit dualizing involutions except in the two cases noted above.

\section{Proof of Theorem~A: Initial Setup and First Reduction}
\noindent
{\it Notation.} Let $R$ be a ring with identity. We write $R^\times$ for the group of units of $R$. 
For any $R$-module $M$ (which for us is always a unital left $R$-module), we write $\text{ann}_R \,M$ for the annihilator of $M$. That is, 
\[
    \text{ann}_R \, M = \{ r \in R :  r m = 0,  \,\,\, \forall \,\, m \in M \}.
    \]
For $m \in M$, we also write $\text{ann}_R \, m = \{ r \in R : r  m = 0 \}$. 
Thus $\text{ann}_R \,M = \bigcap_{m \in M} \text{ann}_R \, m$. 
Note that $\text{ann}_R \, m$ is the kernel of the surjective $R$-module homomorphism
$r \mapsto r m: R \to Rm$, so that $ R/ \text{ann}_R \, m \cong   Rm$ as $R$-modules.

 \subsection{} 
Let $g \in \guv$ with $\mu(g) = \beta$. The space $V$  is a module over the polynomial ring 
$\et$ via $f(T) v = f(g) v$.  Let $p = p(T)$ denote the minimal polynomial of $g$. We have 
\[
  p = p_1^{e_1} \cdots p_n^{e_n}
\]
for distinct monic irreducible elements $p_1, \ldots, p_n \in \et$ and positive integers $e_1, \ldots, e_n$. 

We set $\A = \et / (p)$.  The ideal $(p)$ is simply the annihilator of $V$ as an $\et$-module. In particular,  
$V$ carries an induced  $\A$-module structure.  
The Chinese Remainder Theorem gives a canonical isomorphism of $E$-algebras 
\[
   \et/ (p)   \cong \et/(p_1^{e_1}) \oplus \cdots \oplus \et / (p_n^{e_n}).
\]
Thus 
\[
   \A =  \A_1 \oplus \cdots \oplus \A_n, 
   \]
for ideals $\A_i$ in $\A$ with $\A_i \cong \et/ (p_i^{e_i})$ ($i=1, \ldots, n$). Setting $V_i = \A_i V$ ($i=1, \ldots, n$), 
we have 
\begin{equation}  \label{primary-decomp}
   V =    V_1 \oplus \cdots \oplus V_n.
\end{equation} 
Each $V_i$ is an $E[T]$-submodule and as such has annihilator $(p_i^{e_i})$.  More concretely, each $V_i$ is $g$-stable
and the minimal polynomial of $g$ on $V_i$ is $p_i^{e_i}$.

\subsection{}
As $g$ is invertible, the $\et$-module structure on $V$ extends to a module structure over the ring of Laurent polynomials
$\etl$.  It follows that each $V_i$ in (\ref{primary-decomp}) is an $\etl$-submodule.  
We have 
\[
\text{ann}_{\etl} \,V = p\,\etl \,\,\,  \text{and} \,\,\,  \text{ann}_{\etl} \,V_i = p_i^{e_i}\,\etl  \hspace{12pt} (i=1, \ldots, n).
\] 
The inclusion $\et \subset \etl$ induces canonical $E$-algebra isomorphisms 
\[
\et / (p) \cong  \etl / p \, \etl \,\,\,   \text{and} \,\,\, \et/ (p_i^{e_i}) \cong \etl / p_i^{e_i} \, \etl   \hspace{12pt} (i=1, \ldots, n).
\]
We use these to identify $\A$ with $\etl / p \, \etl$ and each  $\A_i$ with $\etl / p_i^{e_i} \, \etl$.

The $F$-automorphism $\tau$ of $E$ extends to an involution  
\[
    \sum_i   a_i T^i  \longmapsto \sum_i   \tau (a_i)  \beta^i T^{-i}
    \]
on $\etl$ which we continue to denote by $\tau$.  This satisfies the adjoint relation  
\begin{equation} \label{adjoint-tau} 
   \langle  v, f w \rangle =    \langle \tau(f) v, w \rangle, \quad \quad \forall \,\, v, w \in V, \,\,\,\forall \,\, f \in \etl.
\end{equation} 
It follows that  $\tau ( p \,\etl ) = p \,\etl$. Hence there is a $u \in \etl^\times$  
such that $ \tau (p) = u p$ and thus $\tau$ induces an involution on $\A$. 

Further,  for $i= 1, \ldots, n$,    
\[ 
({\rm I} )  \,\,\,  \tau(p_i) = u_i p_{i'} \,\,\,\text{for} \,\,\, i' \neq \, i  \quad \text{or} \quad  ({\rm II}) \,\, \, \tau(p_i) = u_i p_i 
\]
with each $u_i \in \etl^\times$. In case $({\rm I})$ $ \tau$ induces an isomorphism $\A_i \cong \A_{i'}$ while in case $({\rm II})$ it  induces an involution on $\A_i$. 

By (\ref{adjoint-tau}),  
\begin{equation}   \label{perp}
V_k \perp V_l  \,\,\,  \text{unless} \,\,\, \tau (p_k) = u p_l \,\,\,\text{for some}   \,\,\,u \in \etl^\times. 
\end{equation} 
It follows that 
\[
   V = W_1 \oplus \cdots \oplus W_m
   \]
where for a given $W_j$ we have $W_j = V_i \oplus V_{i'}$ for some $i$ and $i'$ as in $({\rm I})$ above or $W_j = V_i$ 
with $i$ as in $({\rm II})$. In particular,  each $W_j$ is an $\etl$-submodule and 
the restriction of $\vform$ to each $W_j$ is non-degenerate. Thus $g \in \guv$ decomposes as 
$g = g_1 \oplus \cdots \oplus g_m$ with $g_j \in GU(W_j)$ for $j= 1,\ldots, m$.  It suffices to prove the result for each 
$g_j$. This means we are reduced to two basic cases. 

\smallskip
{\bf Case I.}  The minimal polynomial of $g$ is $ p_1^e p_2^e$ for some positive integer $e$ and monic irreducible polynomials $p_1, p_2 \in \et$ such that  $\tau(p_1) = u p_2$ for some $u \in \etl^\times$.  We have $\A = \A_1 \oplus \A_2$ with 
\[
\A_i = \et / (p_i^e) =  \etl / \, p_i^e \, \etl  \quad \quad (i=1,2). 
\]
The space $V$ decomposes as $V = V_1 \oplus V_2$ where $V_i = \A_i V$ ($i=1,2$). 
Moreover, by (\ref{perp}), each  $V_i$ is a totally 
isotropic subspace of $V$.

\smallskip 
{\bf Case II.} The minimal polynomial of $g$ is $p^e$ for some positive integer $e$ and some monic irreducible element 
$p \in \et$ such that $\tau(p) = u p$ for some $u \in \etl^\times$. In this case,
\[
\A   = \et / (p^e) = \etl / \, p^e\,\etl.
\]

\section{Proof of Theorem~A: Case I}
\subsection{}  \label{twist} 
As $V=V_1 \oplus V_2$ is non-degenerate and each $V_i$ is totally isotropic, it follows that $\vform$ induces an isomorphism
between $V_1$ and the conjugate dual of $V_2$. That is, if  we write $V_2^\tau$ for the vector space structure on $V_2$ obtained by 
twisting by $\tau$ so that $V_2^\tau = V_2$ as  abelian groups and scalar multiplication on $V_2^\tau$
is given by $a.v = \tau(a) v$ (for $a \in E$ and $v \in V_2$), then  
\[
v \mapsto \langle v, - \rangle: V_1 \longrightarrow \text{Hom}_E(V_2^\tau, E)
 \]
 is an isomorphism of $E$-vector spaces.

Let $e_1, \ldots, e_n$ be any basis of $V_1$. By the preceding paragraph, $V_2$ (or $V_2^\tau$) admits a 
dual basis $f_1, \ldots, f_n$ such that
\[
   \langle e_i, f_j \rangle = \begin{cases} 
                                                         1, \,\,\,\text{if} \,\, i = j, \\
                                                         0, \,\,\,\text{if} \,\, i \neq j.
                                                         \end{cases}
                                                         \]
Thus with respect to the basis $e_1, \ldots, e_n, f_1, \ldots, f_n$, the matrix of $\vform$ is given in block form by 
\[
     J = \begin{bmatrix} 
                   0    & \ep  I_n \\
                   I_n     &  0
                   \end{bmatrix}.
                   \]
For any matrix $a = [a_{ij}]$ with entries in $E$, we set $\tau(a) = [\tau(a_{ij})]$ and write $\tp a$ for the transpose of $a$.                    
Below we often view $E$-linear maps on $V$ as (block) matrices with respect to the 
basis $e_1, \ldots, e_n, f_1, \ldots, f_n$.  

Consider the $F$-linear map $c:V \to V$ given by 
 \[
    \sum_{i=1}^n a_i e_i + \sum_{j=1}^n b_j f_j \overset{c}{\longmapsto}      
                 \sum_{i=1}^n \ep \, \tau(a_i)  e_i + \sum_{j=1}^n \tau(b_j)  f_j.
                 \]      
Setting $a = \begin{bmatrix} 
                                  a_1 \\
                                    \vdots \\
                                  a_n
                                  \end{bmatrix}$ 
                                  and 
                 $b = \begin{bmatrix} 
                                  b_1 \\
                                    \vdots \\
                                  b_n
                                  \end{bmatrix}$, 
we can write $c$  in matrix form as 
$ \begin{bmatrix}
             a \\
             b
             \end{bmatrix} 
             \overset{c}{\longmapsto}  
                                      \begin{bmatrix}
             \ep\, \tau(a)  \\
                 \tau(b)
             \end{bmatrix} $.
The map $c$ is anti-unitary (that is, $\langle c(v) , c(v') \rangle = \langle v',v \rangle$, for all $v, v' \in V$)  and $c^2 = 1$. 

Any anti-unitary $h_1 \in {\rm Aut}_F(V)$ can be written as $h_1 = s_1  c$ with $s_1  \in U(V)$. 
Now  $h_1 = s_1c $ is an involution if and only if $s_1 \,{}^{c}s_1 = 1$ where
${}^{c}s_1 = c s_1 c^{-1}$.  
Similarly, an anti-unitary similitude $h_2$ can be written as $h_2 = c s_2$ with $s_2 \in \guv$. Again
$h_2^2 = \beta$ if and only if  $s_2 \,{}^{c}s_2 = \beta$ with ${}^{c} s_2  =  c s_2 c^{-1}$. 
In this notation, we have $h_1 h_2 = s_1 s_2$ (as $c^2 =1$). 
It follows that Theorem~A in Case~I is equivalent to the following: 
\begin{enumerate}[$(\ast)$]
\item
if $g \in \guv$ with $\mu(g) = \beta$ then $g =   s_1 s_2$ for elements $s_1 \in U(V)$ and $s_2 \in \guv$ such that $s_1 \,{}^{c}s_1 = 1$
and $s_2 \,{}^{c}s_2 = \beta$. 
\end{enumerate}

\subsection{}   \label{frob}
We now prove $(\ast)$. 
Since $g$ preserves $V_1$ and $V_2$, we have 
$g = \begin{bmatrix} 
            a    &     0 \\
            0     &     b
            \end{bmatrix}$. 
The condition $g \in \guv$ says $\tp g J \tau(g) = \beta J$ with $\beta = \mu(g)$. 
A short matrix calculation shows that this means $b = \beta \, \tp \tau(a)^{-1}$, so 
that 
 \[
 g = \begin{bmatrix} 
            a    &     0 \\
            0     &    \beta \, \tp \tau(a)^{-1}
            \end{bmatrix}.
            \]
We set
\[
s_1 = \begin{bmatrix} 
              0   &   d_1 \\
             \ep\, \tp \tau(d_1)^{\,-1}  & 0            
            \end{bmatrix}, \,\,\,
            s_2 = \begin{bmatrix} 
              0     &    \ep\,\beta \, \tp \tau(d_2)^{-1} \\
              d_2     & 0            
            \end{bmatrix},  
            \]
for elements $d_1, d_2 \in {\rm GL}_n(E)$.  It is routine to check that  
$\tp s_1 J \tau(s_1) = J$ and $\tp s_2 J \tau(s_2) = \beta \, J$. Thus $s_1 \in \uv$ and $s_2 \in \guv$.  

To calculate ${}^c s_1$, note that for all column vectors 
$ \begin{bmatrix} 
            x \\
            y 
            \end{bmatrix}$ as above,  we have 
\begin{align*} 
     \begin{bmatrix} 
            x \\
            y 
            \end{bmatrix}   \overset{c}{\longmapsto} 
      \begin{bmatrix} 
          \ep \,\tau(x) \\
              \tau(y) 
              \end{bmatrix}   \overset{s_1}{\longmapsto}  
         \begin{bmatrix} 
              0   &   d_1 \\
             \ep\, \tp \tau(d_1)^{\,-1}  & 0            
            \end{bmatrix}  \,            
    \begin{bmatrix} 
          \ep \,\tau(x) \\
              \tau(y) 
              \end{bmatrix}     &=
         \begin{bmatrix} 
            d_1 \, \tau(y) \\
          \tp \tau(d_1)^{\,-1} \, \tau(x)  
              \end{bmatrix}   \\
          & \overset{c}{\longmapsto}
          \begin{bmatrix} 
            \ep\,\tau(d_1) \, y \\
          \tp d_1^{\,-1} \, x 
              \end{bmatrix}   
          =  \begin{bmatrix}
                   0   &  \ep\,\tau(d_1) \\
                   \tp  d_1^{\,-1}   & 0
                   \end{bmatrix} \,
                   \begin{bmatrix} 
                       x \\
                       y
                       \end{bmatrix}.            
 \end{align*}
 That is, 
 \[
      {}^c s_1  =    \begin{bmatrix}
                   0   &  \ep\,\tau(d_1) \\
                   \tp d_1^{\,-1}   & 0
                   \end{bmatrix}   =   \ep \,\tau(s_1).
                   \]
A similar computation gives 
\[
        {}^c s_2     =    \begin{bmatrix}
                   0   &  \beta \, \tp d_2^{\,-1} \\
                   \ep \, \tau (d_2)   & 0
                   \end{bmatrix}   =   \ep \,\tau(s_2).
                   \]                  
                             
By direct matrix calculations, the conditions $s_1 \, {}^{c}s_1 = 1$ and   $s_1 \, {}^{c}s_1 = \beta$ 
are equivalent to 
\[
    d_1 \,\tp  d_1^{\,-1} = I_n \,\,\,\text{and} \,\,\, d_2 \,\tp  d_2^{\,-1} = I_n, 
    \]
that is, $d_1$ and $d_2$ are symmetric.  
Since  $g = s_1 s_2$ is equivalent to $a = d_1 d_2$, we are reduced to the matrix statement:
\begin{enumerate}[$(\ast)'$]
\item
For any (invertible) $n \times n$ matrix $a$ (with entries in $E$), 
there exist (invertible) symmetric $n \times n$ matrices $d_1$ and $d_2$ 
(with entries in $E$) such that 
$
a    = d_1  d_2.
$
\end{enumerate}
Now it is well-known that any square matrix is conjugate to its transpose by a symmetric matrix (see, for example, \cite{Kap} page~76).  
Thus
\[
           d^{\,-1}  a  d   =  \tp  a
           \]
 with $d \in {\rm GL}_n(E)$ symmetric.           
This means $d^{\,-1} \, a  =  \tp a \, d^{\,-1}$, so $\tp (d^{\,-1} \, a) = \tp a \,d^{\,-1} = d^{\,-1} \,a$.           
Therefore  
\[
      a    = d  \cdot d^{\,-1} a
      \]
expresses $a$ as product of symmetric matrices (with entries in $E$). This completes the proof of Theorem~A in Case~I.

\section{Proof of Theorem~A: Case II and Second Reduction}  \label{Case II}
In this case, the minimal polynomial of $g$ is $p^e$  (for some positive integer $e$) 
where $p$ is irreducible and  $\tau(p) = up$ for some $u \in \etl^\times$.  
Let  $\A = \etl / p^e  \etl$. As $\text{ann}_{\etl} \,V = p^e\,\etl$, 
the space $V$ is naturally an $\A$-module and as such is faithful, that is, 
$\text{ann}_{\A} \,V = \{ 0 \}$.  Note that $\A$   
is a local ring with unique maximal ideal 
$\p = p \etl / p^e \etl$. 
More strongly, the ideals in $\A$ form a chain 
\[
\A \supsetneqq \p \supsetneqq \cdots \supsetneqq \p^{e-1} \supsetneqq \p^e = \{ 0 \}.
\]

\subsection{}   \label{non-deg criterion}
As $\text{ann}_{\A} \,V = \{ 0 \}$, there is some $v \in V$ such that 
$\text{ann}_{\A} \,v = \{ 0 \}$.   
Below we will need to consider the restriction of $\vform$ to the submodule $\A v$ 
generated by such an element and will make use of the following non-degeneracy criterion. 

\begin{Lem}
Let $v \in V$ with $\text{ann}_{\A} \, v = \{ 0 \}$ (equivalently,  $\text{ann}_{\etl} \, v = p^e \,\etl$).  The cyclic submodule
$\A v$ is non-degenerate 
if and only if $\langle \p^{e-1} v, v \rangle \neq \{ 0 \}$. 
\end{Lem}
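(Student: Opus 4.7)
The plan is to translate non-degeneracy of $\vform|_{\A v}$ into an ideal-theoretic condition in $\A$, using two structural features of Case~II. First, the ideals of $\A$ form the single descending chain $\A \supsetneqq \p \supsetneqq \cdots \supsetneqq \p^{e-1} \supsetneqq \{0\}$, so every nonzero ideal of $\A$ contains the minimal nonzero ideal $\p^{e-1}$. Second, the induced involution $\tau$ on $\A$ preserves each $\p^i$, since $\tau(p) = up$ with $u \in \etl^\times$ gives $\tau(p^i \etl) = p^i \etl$.

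Since $\text{ann}_\A \, v = \{0\}$, the map $a \mapsto av$ is an $\A$-module isomorphism from $\A$ onto $\A v$, and by the adjoint relation (\ref{adjoint-tau}) we have $\langle av, bv \rangle = \langle \tau(b) a\, v, v\rangle$ for all $a, b \in \A$. Since $\tau$ permutes $\A$, this shows that $av \in \A v$ lies in the radical of $\vform|_{\A v}$ if and only if $\langle (\A a) v, v \rangle = \{0\}$, where $\A a$ denotes the principal ideal generated by $a$.

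Both implications then follow by chasing ideals along the chain. If $\langle \p^{e-1} v, v\rangle = \{0\}$, then for any nonzero $a \in \p^{e-1}$ one has $\A a \subseteq \p^{e-1}$, hence $\langle (\A a) v, v\rangle = \{0\}$, while $av \neq 0$ by faithfulness; this produces a nonzero radical vector, so $\A v$ is degenerate. Conversely, if $\A v$ has a nonzero radical element $a v$, then $a \neq 0$, and the nonzero ideal $\A a$ must contain $\p^{e-1}$, forcing $\langle \p^{e-1} v, v\rangle \subseteq \langle (\A a) v, v\rangle = \{0\}$. No serious obstacle arises; the argument is essentially a dictionary between subsets of $\A v$ and ideals of $\A$ via the adjoint relation, and I need only note in passing that the $\ep$-hermitian property of $\vform$ makes left and right radicals coincide.
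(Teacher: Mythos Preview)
Your proof is correct and follows essentially the same approach as the paper's: both arguments use the adjoint relation to identify the radical of $\vform|_{\A v}$ with an ideal of $\A$ under the isomorphism $a \mapsto av$, and then exploit the chain structure $\A \supsetneqq \p \supsetneqq \cdots \supsetneqq \p^{e-1} \supsetneqq \{0\}$. The paper phrases the $(\Leftarrow)$ direction as ``$\mathrm{rad}\,\A v = \p^c v$ for some $c$, and the hypothesis forces $c > e-1$'', while you set up a symmetric element-level characterization first and run both directions through it; the content is the same. (Your remark that $\tau$ preserves each $\p^i$ is true but not actually used---you only need that $\tau$ is a bijection on $\A$.)
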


\begin{proof} 
$(\Rightarrow)$ Suppose $\A v $ is non-degenerate. By hypothesis, $p^{e-1}v \neq 0$. Thus there is an $f  \in \etl$ such that 
$ \langle p^{e-1} v, f v \rangle \neq 0$, so that $\langle \tau(f) \, p^{e-1} v, v \rangle \neq 0$ and hence 
$\langle \p^{e-1} v, v \rangle \neq \{ 0 \}$. 

$(\Leftarrow)$ Suppose now that $\langle  \p^{e-1} v, v \rangle \neq \{ 0 \}$. 
We write $\text{rad} \,\A v$ for the radical of $\vform$ on restriction to $\A v$. It is immediate that 
$\text{rad} \,\A v$ is an $\A$-submodule. The map $a \mapsto av:\A \to \A v$ is an isomorphism of $\A$-modules. It follows that 
$\text{rad} \,\A v = \p^c v$ for some non-negative integer $c$ (as the only ideals in $\A$ are the powers of $\p$). 
Our assumption $\langle \p^{e-1} v, v \rangle \neq \{ 0 \}$ implies that $c > e-1$. Thus 
$\text{rad} \,\A v = \{ 0 \}$, that is, $\A v$ is non-degenerate.
\end{proof}

\subsection{}   \label{x and y}
Let $x \in V$ with $\text{ann}_{\A} \, x = \{ 0 \}$ (equivalently,  $\text{ann}_{\etl} \,x = p^e \, \etl$)  and set $X = \A x$.  
Now $p^{e-1} x \neq 0$, so there is a $y \in V$ with $\langle p^{e-1} x, y \rangle \neq 0$. 
Assume that the subspace $X$ is degenerate, so that $y \notin X$ (by Lemma~\ref{non-deg criterion}). 
Setting $Y = \A y$, we claim that if $X \cap Y \neq \{ 0 \}$ then $Y$ is non-degenerate.

To prove this, let $z \in X \cap Y$. We have 
\[
    z   = p^c g x = p^{c'} g' y
   \]      
for integers $c$ and $c'$ with $0 \leq c < e,\, 0 \leq c' < e$ and elements $g, g' \in \et$ that are prime to $p$. 
Thus 
\[
    \text{ann}_{\et} \, z = ( p^{e-c}) = (p^{e-c'})
    \]
and so $c = c'$.        

Now there are elements $a, b \in \et$ such that $a g + b p^e = 1$. Hence 
\begin{align*} 
     p^{e-c-1} az      &= p^{e-c-1} a ( p^c g x) \\
                               &=  p^{e-1} ag x \\
                               &=  p^{e-1} ( 1- bp^e) x \\
                               &= p^{e-1} x  \quad (\text{as}\,\,p^e x = 0). 
                               \end{align*} 
 In addition, 
 \begin{align*} 
      p^{e-c-1} az     &= p^{e-c-1} a ( p^c g' y) \\
                               &= p^{e-1} a g' y, 
                               \end{align*}                               
so that 
\[
        p^{e-1} x    =   p^{e-1} a g' y.
       \]
As   $\langle p^{e-1} x, y \rangle \neq 0$, it follows that $\langle p^{e-1} a g' y, y \rangle \neq 0$. Therefore 
$\langle \p^{e-1}y, y \rangle \neq \{ 0 \}$. Hence, by Lemma~\ref{non-deg criterion}, $Y = \A y$ is non-degenerate

\subsection{} \label{non-cyclic} 
We now show that  if $V$ does not admit a non-degenerate cyclic submodule (generated by 
an element $v$ such that $\text{ann}_{\A} \, v = \{ 0 \}$) then it must contain a non-degenerate non-cyclic submodule of 
a very special kind. 

\begin{Lem}
Suppose that for any $v \in V$ such that $\text{ann}_{\A}\,v = \{ 0 \}$ the submodule $\A v$ is degenerate. Then there exist
$x$ and $y$ in $V$ such that $\langle \p^{e-1} x, y \rangle \neq \{ 0 \}$. We have $\A x \cap \A y = \{ 0 \}$ and the submodule
$\A x \oplus \A y$ is non-degenerate. 
\end{Lem}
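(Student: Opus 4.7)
The plan is to pick $x$ and $y$ using faithfulness and non-degeneracy, then deduce non-degeneracy of $\A x \oplus \A y$ from the self-injective structure of $\A$. By faithfulness I pick $x \in V$ with $\text{ann}_\A x = \{0\}$. Then $p^{e-1} x \neq 0$, so non-degeneracy of $\langle\,,\,\rangle$ on $V$ gives $y \in V$ with $\langle p^{e-1} x, y \rangle \neq 0$; this is the first assertion. Using (\ref{adjoint-tau}) and $\tau(p) = up$ with $u \in \etl^\times$, one rewrites $\langle p^{e-1} x, y \rangle = \langle x, u^{e-1} p^{e-1} y \rangle$, so $p^{e-1} y \neq 0$ and hence $\text{ann}_\A y = \{0\}$. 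By the standing hypothesis $\A y$ is degenerate, so \S\ref{x and y} (with the roles of $x$ and $y$ swapped) forces $\A x \cap \A y = \{0\}$.

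For non-degeneracy of $M := \A x \oplus \A y$, for each $P, Q \in \{x,y\}$ introduce $\phi_{PQ}: \A \to E$, $\phi_{PQ}(c) := \langle cP, Q \rangle$, and give $\mathrm{Hom}_E(\A, E)$ the right $\A$-action $(\psi \cdot a)(c) := \psi(ca)$. The key fact is that $\phi_{xy}$ is a free generator: if $\phi_{xy} \cdot a = 0$ then $\A a \subseteq \ker \phi_{xy}$; but $\phi_{xy}(p^{e-1}) \neq 0$ and every nonzero ideal of $\A$ contains $\p^{e-1}$, forcing $a = 0$. A dimension count upgrades the injection $a \mapsto \phi_{xy} \cdot a$ to an $\A$-isomorphism $\A \xrightarrow{\sim} \mathrm{Hom}_E(\A, E)$; the $\ep$-hermitian identity shows the analogous statement for $\phi_{yx}$. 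Hence one may write $\phi_{xx} = \phi_{xy} \cdot s_{xx}$, $\phi_{yx} = \phi_{xy} \cdot s_{yx}$, $\phi_{yy} = \phi_{xy} \cdot s_{yy}$, with $s_{yx} \in \A^\times$.

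A short expansion using the adjoint relation turns the condition $ax + by \in \mathrm{rad}(M)$ into the pair of identities $\phi_{xx} \cdot a + \phi_{yx} \cdot b = 0$ and $\phi_{xy} \cdot a + \phi_{yy} \cdot b = 0$ in $\mathrm{Hom}_E(\A, E)$; passing back to $\A$ via the generator $\phi_{xy}$ leaves $s_{xx} a + s_{yx} b = 0$ and $a + s_{yy} b = 0$, hence $(s_{yx} - s_{xx} s_{yy}) b = 0$. Since $\A x$ and $\A y$ are degenerate, $s_{xx}$ and $s_{yy}$ have nontrivial annihilators and so lie in the maximal ideal $\p$ (by the uniserial structure of $\A$); consequently $s_{yx} - s_{xx} s_{yy}$ is a unit in the local ring $\A$, forcing $b = 0$ and then $a = 0$. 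The main obstacle is recognizing the right algebraic packaging: once $\mathrm{Hom}_E(\A, E)$ is identified as a rank-one free $\A$-module with generator $\phi_{xy}$ (the Frobenius property of the self-injective ring $\A$), the miracle that a single non-vanishing $\langle p^{e-1} x, y \rangle \neq 0$ rescues non-degeneracy of the rank-two submodule becomes mechanical; without this viewpoint one is pushed into a case analysis indexed by the depths of the radicals of $\A x$ and $\A y$.
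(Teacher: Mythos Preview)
Your proof is correct and takes a genuinely different route from the paper's. After the common opening (choosing $x,y$, checking $\text{ann}_\A y=\{0\}$, and deducing $\A x \cap \A y = \{0\}$ from \S\ref{x and y}), the paper proves non-degeneracy of $M = \A x \oplus \A y$ by a direct computation: for a nonzero $z = \bar p^{\,c} \bar g\, x + \bar p^{\,c'} \bar g'\, y$ with $c' \leq c$ and $\bar g, \bar g' \in \A^\times$, it exhibits the explicit witness $\p^{e-c-1} x$ and checks that $\langle \p^{e-c-1} x, z \rangle \supset \langle \p^{e-1} x, y \rangle \neq \{0\}$, using $\langle \p^{e-1} x, x \rangle = \{0\}$ to kill the $x$--$x$ contribution. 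Your argument instead recognises $\text{Hom}_E(\A,E)$ as a free rank-one right $\A$-module generated by $\phi_{xy}$ (the Frobenius property of the uniserial Artinian ring $\A$), and thereby converts the radical condition into a $2\times 2$ system over $\A$ with ``determinant'' $s_{yx}-s_{xx}s_{yy}$; this is a unit because $s_{yx}\in\A^\times$ while degeneracy of $\A x$ and $\A y$ forces $s_{xx},s_{yy}\in\p$. The paper's approach is more elementary and stays close to Wonenburger's original line; yours is more structural and makes transparent why the single condition $\langle \p^{e-1}x,y\rangle\neq\{0\}$ governs non-degeneracy of the whole rank-two piece, at the modest cost of setting up the module-theoretic machinery. (A minor aside: no role-swap is needed when invoking \S\ref{x and y}; its contrapositive applies directly since both $\A x$ and $\A y$ are degenerate under the standing hypothesis.)
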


\begin{proof}
As in \S\ref{x and y}, we choose $x$ and $y$ in $V$ such that $\text{ann}_{\A} \,x = \{ 0 \}$ and $\langle p^{e-1}x,y \rangle \neq 0$. 
It follows that $\text{ann}_{\A}\,y = \{ 0 \}$. 
As above, we set $X = \A x$ and $Y = \A y$.  By hypothesis, $X$ and $Y$ are degenerate, 
so  Lemma~\ref{non-deg criterion} gives 
\[
       \langle \p^{e-1} x, x \rangle =        \langle \p^{e-1} y, y \rangle =  \{ 0 \}.
       \] 
Further, by the argument in \S\ref{x and y},   
$X \cap Y = \{0 \}$. 
We need to show that $X \oplus Y$ is non-degenerate.

Any non-zero element $z \in X \oplus Y$ can be written as $z = p^c g x + p^{c'} g' y$ 
for integers $c$ and $c'$ with $0 \leq c < e,\, 0 \leq c' < e$ and elements $g, g' \in \et$ that are prime to $p$. 
Switching the roles of $x$ and $y$ if necessary, we may assume that $c' \leq c$.  

To prove non-degeneracy of $X \oplus Y$, we will show that $ \langle \p^{e-c-1} x, z \rangle \neq \{ 0 \}$.
Writing $\Bar{f}$ for the image of $f \in \etl$ under the canonical quotient map from $\etl$ to $\A = \etl / \,p^e\,\etl$, we have 
\[
                \langle \p^{e-c-1} x, z  \rangle =   \langle \p^{e-c-1}x, \Bar{p}^c \Bar{g} x + \Bar{p}^{c'} \Bar{g}' y  \rangle. 
                \]
Now   $\Bar{p}^c \, \p^{e-c-1} = \p^{e-1}$ and $\Bar{g} \in \A^\times$, so 
\begin{align*}                
                 \langle \p^{e-c-1}x, \Bar{p}^c \Bar{g} x  \rangle  &= \langle \p^{e-1} x, x \rangle  \\
                                                                                 &= \{ 0 \}.
                                                                                 \end{align*} 
Thus 
\begin{align*}                                          
    \langle \p^{e-c-1} x, z  \rangle      &=    \langle \p^{e-c - 1}x, \Bar{p}^{c'} \Bar{g}'y \rangle  \\                             
                                                         &=    \langle \p^{e-c+c'-1}x, y  \rangle \quad(\text{using}\,\,\Bar{g}' \in \A^\times)  \\     
                                                         &\supset   \langle \p^{e-1} x, y \rangle 
                                                                \quad (\text{as} \,\, c' \leq c, \, \text{so} \,\, e-c+c'-1 \leq e-1) \\
                                                                & \neq \{ 0 \}. 
\end{align*}
In particular, $ \langle \p^{e-c-1} x, z  \rangle \neq \{ 0 \}$, as claimed. 
\end{proof}

\subsection{} 
We have established that $V$ contains an $\A$-submodule of one of the following types:
\begin{enumerate}[(A)]
\item
a non-degenerate $\A$-submodule 
$\A v$ with $\text{ann}_{\A}\,v = \{ 0 \}$; 

\item
a non-degenerate $\A$-submodule as in Lemma~\ref{non-cyclic}.
\end{enumerate}
 Now if $W$ is any non-degenerate 
$\A$-submodule of $V$ then $V = W \oplus W^\perp$ as $\A$-modules. 
Moreover $\text{ann}_{\A} \,W^\perp = \p^c$ for some non-negative integer $c \leq e$.  
If Theorem~A holds for $W$ and $W^\perp$ then it also holds for $V$.
Thus we can complete the proof in Case~II by induction on $\dim_E V$ provided we can establish the result in the two 
special cases  (A) and (B). 

\section{Proof of Theorem~A: Case II-A}  \label{case  II-A}
\subsection{}
This is the cyclic case in which $V = \A v$ with $\text{ann}_{\A}\,v = \{ 0 \}$. That is, the map 
\begin{equation}  \label{one} 
a \mapsto av: \A \to V   
\end{equation} 
is an isomorphism of $\A$-modules. We'll show that there is an anti-unitary involution
$t:V \to V$ such that, for all $a \in \A$,  
\begin{equation} \label{two}
t \, a = \tau(a) \, t 
\end{equation}
as elements of $\text{End}_F \,V$. Now the element $T \in \et$, and so also its image in $\A$, acts on $V$ via $g \in \guv$.  
Thus if we take $a$ to be the image of $T$ in $\A$, then (\ref{two}) gives $t g = \beta g^{-1} t$, or $(tg)^2 = \beta$.  
Hence  
\[
        g = t \cdot t g
        \]
gives the requisite factorization.

\subsection{} \label{anti-unitary}
To establish (\ref{two}), we define $t:V \to V$ by 
\[
     t( a v ) = \tau (a) v, \quad \quad \forall \,\, a \in \A. 
     \]
Thus $t$ is simply the involution $\tau$ of $\A$ transported to $V$ via the isomorphism (\ref{one}). 
It is therefore immediate that $t$ is an involution and that (\ref{two}) holds. To check that $t$ is anti-unitary,  
let $a, b \in \A$. By (\ref{adjoint-tau}), 
\begin{align*}  
     \langle   t(av), t(bv) \rangle   &= \langle   \tau(a) v , \tau(b) v \rangle \\
                                                   &= \langle   b  \tau (a) v, v \rangle \\
                                                   &=  \langle  b v, a v \rangle.
                                                   \end{align*} 

\section{Proof of Theorem~A: Case II-B}
We have $V = \A x \oplus \A y$ with $\langle \p^{e-1} x , y \rangle \neq \{ 0 \}$. Further, $\A x$ and $\A y$ are both degenerate, 
so Lemma~\ref{non-deg criterion} gives 
\[
\langle \p^{e-1} x , x \rangle = \langle \p^{e-1} y, y \rangle =   \{ 0 \}.  
\]
This case requires a more elaborate argument. 

\subsection{}  \label{ss-duality}
We observe first that the subspaces $\A x$ and $\A y$ are in duality via $\vform$.  That is, the map
\begin{equation} \label{duality}
a y \mapsto ( a' x \mapsto \langle   a' x, a y \rangle ):  \A y \to   \text{Hom}_E( \A x, E)
\end{equation}
is a bijection. More precisely, if as in  \S\ref{twist} 
we write $(\A y)^\tau$ for the $E$-vector space structure on $\A y$ obtained by twisting by $\tau$, then (\ref{duality}) 
is an isomorphism of $E$-vector spaces  between  $(\A y)^\tau$ and $ \text{Hom}_E( \A x, E)$. 

To prove this, note that the kernel of the given map is an $\A$-submodule and so equals $\p^c y$ for some non-negative integer 
$c$. Now  $\langle  \p^{e-1} x , y \rangle \neq \{ 0 \}$ and hence $c > e-1$.  As $\p^e = \{ 0 \}$, the kernel must be  trivial and  
thus (\ref{duality}) is injective. Since $\dim_E \A x = \dim_E \A y \,\, (= \dim_E \A)$, the map is also surjective. 

\subsection{}
The map $a  x  \mapsto \langle y, \tau(a) x \rangle = \langle a y, x \rangle $ belongs to $\text{Hom}_E( \A x, E)$. 
Thus by \S\ref{ss-duality}, there is a unique $\gamma \in \A$ such that 
\begin{equation}   \label{key}
             \langle  a y ,  x \rangle =  \langle a x, \gamma y \rangle, \quad \quad \forall \,\, a \in \A.
             \end{equation} 
We claim that $\gamma \in \A^\times$. Indeed,  $\langle \p^{e-1} x , y \rangle \neq \{ 0 \}$ and $\tau(\p) = \p$, so 
$\langle \p^{e-1} y, x \rangle \neq \{ 0 \}$. It follows that $\langle \p^{e-1} x, \gamma y \rangle \neq \{ 0 \}$, or equivalently
$\langle \tau ( \gamma) \p^{e-1} x, y \rangle \neq \{ 0 \}$.  As $\p^e = \{ 0 \}$, we see that $\tau (\gamma) \notin \p$. Therefore
  $\tau(\gamma) \in \A^\times$, whence also $\gamma \in \A^\times$.

\subsection{}
We claim next that $ \gamma \,  \tau (\gamma) = 1$.  Rewriting (\ref{key}) as 
\[
\langle  x, a \gamma y \rangle =   \langle y, a x \rangle, 
\]
we have
\begin{align*} 
  \langle  x, a \gamma y \rangle      &= \ep\, \tau (  \langle a x , y \rangle ) \\
                                                        &= \ep\,  \tau ( \langle a x, \gamma^{-1} \gamma y \rangle ) \\
                                                        &= \ep \, \tau  (  \langle \tau(\gamma^{-1} ) a x, \gamma y  ) \\
                                                        &=  \ep \, \tau ( \langle \tau(\gamma^{-1} a y, x \rangle ) \quad (\text{by} \,\,  (\ref{key})) \\
                                                        &=    \langle x, \tau (\gamma^{-1}) a y \rangle, \qquad \forall \,\, a \in \A.  
                                                        \end{align*} 
It follows that 
\[
\langle  a x, \gamma y   \rangle =    \langle a x, \tau(\gamma^{-1}) y \rangle, \quad \quad \forall \,\, a \in \A.
\]
By bijectivity of (\ref{duality}), $\tau(\gamma^{-1}) y = \gamma y$, whence $\tau (\gamma^{-1} ) = \gamma$, that is, 
$\gamma \, \tau(\gamma) = 1$.

\subsection{}
Define $t: \A x \oplus \A y \to \A x \oplus \A y$ by  
\[
       t( ax + by) =  \tau(a) u + \tau (b) \gamma y, \qquad \forall \,\,a, b \in \A.
       \]
We claim that $t$ is an anti-unitary involution such that, for all $a \in \A$,   
\begin{equation} \label{three}
t \, a = \tau(a) \,  t  
\end{equation} 
as elements of $\text{End}_F (\A x \oplus \A y)$.  
Once this is established, we can complete the argument exactly as in \S\ref{case II-A}. 
That is, $(tg)^2 = \beta$, and thus as above 
$g = t \cdot tg$ gives the requisite factorization. 

Applying $t$ twice, we obtain 
\begin{align*} 
      ax + by        &\overset{t}{\longmapsto}   \tau(a) x + \tau (b) \gamma y \\
                          &\overset{t}{\longmapsto}   a x +  b \tau( \gamma) \,\gamma y 
                           =     a x + by   \quad (\text{as}\,\, \tau(\gamma) \, \gamma = 1), 
                           \end{align*} 
and so $t$ is an involution.                           

 The identity (\ref{three}) is immediate. In detail, for all $a, a', b' \in \A$, 
 \begin{align*} 
    t a ( a'x + b'y)    &=   t( a a' x + a b' y) \\
                              &= \tau(a) \tau(a') x + \tau(a) \tau (b') \gamma y \\
                              &=  \tau (a) t ( a'x + b'y). 
                              \end{align*} 
Finally, to show that $t$ is anti-unitary, it suffices to verify the following four identities (for all $a, b \in \A$):
\begin{align*}
(1) \,\, \langle t( ax) , t(bx) \rangle &=   \langle   bx, ax \rangle; \\
(2) \,\, \langle t( ay) , t(by) \rangle &= \langle   by, ay \rangle; \\
(3)  \,\, \langle t(ax), t(by) \rangle &= \langle by, ax \rangle;  \\
(4) \,\, \langle t(by),  t( ax) \rangle &= \langle ax, by \rangle.
\end{align*}      
Applying $\tau$ to both sides of $(3)$ gives  $(4)$, so it's enough  to check (1), (2), (3).      
We can verify (1) directly as in \S\ref{anti-unitary}.  The argument for (2) is similarly 
straightforward using $\gamma \, \tau(\gamma) = 1$. 
To check (3), note
\begin{align*} 
    \langle  t( ax), t(by) \rangle    &=  \langle \tau(a) x, \tau (b) \gamma y \rangle \\
                                                  &=    \langle b \tau(a) x, \gamma y \rangle \\
                                                  &=    \langle b \tau(a) y, x \rangle \quad (\text{using} \,\, (\ref{key}) )\\
                                                  &=   \langle by, ax \rangle.    
                                                                        \end{align*}
This completes the proof in Case~II-B and so concludes the proof of Theorem~A. 
\qed 

\section{Some Examples and Applications} \label{apps}

\subsection{} Suppose that $E=F$ and $\ep = -1$, so that $\uv = \mathrm{Sp}(V)$ and $\mathrm{GU}(V) = \mathrm{GSp}(V)$. 
Assume also that  $\mathrm{char}(F) \neq 2$. As noted in the introduction, Theorem~A for the symplectic  group $\mathrm{Sp}(V)$ was proved by 
Wonenburger \cite{Wo} and the case of the similitude group $\mathrm{GSp}(V)$ was treated in  \cite{ryan-gsp}. 
Assume now that $\mathrm{char}(F) = 2$. Then the case of symplectic groups was proved by Gow \cite{Gow} and Ellers and Nolte \cite{EN}. 
If $F$ is perfect, the similitude case follows readily (as every element of $F$ is a square). The similitude case for $\mathrm{char}(F) = 2$ and 
$F$ imperfect appears to be new.

\subsection{} Suppose now that $[E:F]=2$ and $\epsilon = 1$. 
Let $V = E^n$ and view the elements of $V$ as column vectors. 
Consider the non-degenerate hermitian form  
$\vform$ on $V$ given by $\langle x, y \rangle = {^\top x} \, \tau(y)$. Here, as in \S2.1, 
$\tau(y)$ is obtained by applying the automorphism $\tau$ to each coordinate of $y$. Similarly, 
for any matrix $a = [a_{ij}]$ with entries in $E$, we set $\tau(a) = [\tau(a_{ij})]$. 
We write $\mathrm{U}(n)$ for the isometry group of $\vform$. Thus  
\[ 
\mathrm{U}(n) = \{ g \in \mathrm{GL}_n(E) :  {^\top g} \, \tau(g) = 1 \}.
\] 
The map $x \overset{c}{\longmapsto}  \tau(x):V \to V$ is an anti-unitary involution for $\vform$. 
For any $a \in M_n(E)$ (viewed an an $E$-linear map on $V$ via left multiplication), we have ${}^{c} a = c ac^{-1} = \tau(a)$. 
 
The calculation that gave ($\ast$) of \S2.1 shows that Theorem~A for $\mathrm{U}(n)$ is equivalent to the statement: 
\begin{enumerate}[$(\ast\ast)$]
\item
if $g \in \mathrm{U}(n)$ then $g =   s_1 s_2$ for elements $s_i \in \uv$ such that $s_i \,{}^{c}s_i = 1$ for $i=1, 2$.  
\end{enumerate}
\smallskip
From $s_i \,{}^{c}s_i = 1$ and $s_i \in \mathrm{U}(n)$, we see that 
$$s_i^{-1} = {^c s_i} = \tau(s_i) = {^\top s_i^{-1}}.$$
Thus each $s_i \in \mathrm{U}(n)$ is symmetric as an element of $\mathrm{GL}_n(E)$. Hence  ${^\top g} = {^\top s_2} {^\top s_1} = s_2 s_1$ 
and 
\[
s_1^{-1}  g \, s_{\mathstrut 1} = {^\top g}. 
\]
In particular, we obtain the following unitary group version of the result of Frobenius (used in \S\ref{frob})  that any matrix is conjugate to its transpose by a symmetric matrix. 

\begin{Cor}
For any $g \in \mathrm{U}(n)$, there exists a symmetric matrix $s \in \mathrm{U}(n)$ such that $s g s^{-1} = {^\top g}$.
\end{Cor}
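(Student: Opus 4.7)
The plan is to read the Corollary off as an immediate consequence of the equivalent formulation $(\ast\ast)$ of Theorem~A for $\mathrm{U}(n)$ that has just been set up. Given $g \in \mathrm{U}(n)$, I would apply $(\ast\ast)$ to produce a factorization $g = s_1 s_2$ with $s_i \in \mathrm{U}(n)$ and $s_i \,{}^{c} s_i = 1$ for $i = 1, 2$.

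As the paragraphs preceding the Corollary already note, the relations $s_i \,{}^{c} s_i = 1$ and $\tp s_i \, \tau(s_i) = 1$ together force $s_i^{-1} = \tp s_i^{-1}$, so that each $s_i$ (and hence each $s_i^{-1}$) is symmetric as an element of $\mathrm{GL}_n(E)$. Transposing the factorization and using this symmetry,
\[
\tp g = \tp s_2 \, \tp s_1 = s_2 s_1,
\]
so that $s_1^{-1} g \, s_1 = s_2 s_1 = \tp g$. The matrix $s := s_1^{-1}$ is symmetric and lies in $\mathrm{U}(n)$, and satisfies $s g s^{-1} = \tp g$, which is exactly the assertion.

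The entire substance lies in Theorem~A by way of $(\ast\ast)$; the Corollary itself is a routine transpose-and-conjugate manipulation with no genuine obstacle of its own. It is, appropriately, the unitary-group analogue of the classical fact of Frobenius (every square matrix is conjugate to its transpose by a symmetric matrix) that was invoked in Case~I of the proof of Theorem~A, and it is reassuring that the packaging established there delivers the unitary version essentially for free.
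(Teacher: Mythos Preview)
Your proposal is correct and matches the paper's own argument essentially line for line: apply $(\ast\ast)$ to get $g=s_1s_2$, use $s_i\,{}^{c}s_i=1$ together with $s_i\in\mathrm{U}(n)$ to see that each $s_i$ is symmetric, transpose to obtain ${}^{\top}g=s_2s_1$, and take $s=s_1^{-1}$.
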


When $E/F = \mathbb{C}/\mathbb{R}$, the Corollary follows immediately from the fact that any unitary matrix is unitarily diagonalizable. 
In the case that $E/F$ is an extension of finite fields,  
the Corollary is proved in Lemma 5.2 of \cite{GowVin}.  

\subsection{} \label{GOdet} Let $E=F = \mathbb{F}_q$ be a finite field with $q$ elements with $q$ odd and let $\epsilon = 1$, so that
$\mathrm{GU}(V)$ is a finite group of orthogonal similitudes.  We restrict attention to the case that $\mathrm{dim}(V)=2m$ is even. In this 
setting  there are two equivalence classes of non-degenerate symmetric forms on $V$, giving two distinct finite orthogonal similitude groups.  We denote these groups  by $\mathrm{GO}^{\pm}(2m, \mathbb{F}_q)$, and write $\mathrm{O}^{\pm}(2m, \mathbb{F}_q)$ for the 
corresponding orthogonal groups.  For $\uv = \mathrm{O}^{\pm} (2m, \mathbb{F}_q)$, the element $h_1$ in Theorem~A can be chosen so that 
$\mathrm{det}(h_1) = (-1)^m$ (see Lemma 4.7 of \cite{SFV}).  For use in later work, we now extend this observation to the case 
$\guv = \mathrm{GO}^{\pm}(2m, \mathbb{F}_q)$.

\begin{Prop}  Let $G = \mathrm{GO}^{\pm}(2m, \mathbb{F}_q)$ with $q$ odd and let $g \in G$ with $\mu(g) = \beta$.  Then there exist $h_1, h_2 \in G$ such that $g = h_1 h_2$, $\mu(h_1) = 1$, $\mu(h_2) = \beta$, $h_1^2 = 1$, $h_2^2 = \beta$, and $\det(h_1) = (-1)^m$.
\end{Prop}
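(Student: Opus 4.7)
The strategy is to split on whether $\beta$ is a square in $\mathbb{F}_q$.  In the square case the result follows almost mechanically from Lemma 4.7 of \cite{SFV}; in the non-square case one must apply Theorem~A and verify the determinant condition by tracking the construction of $h_1$ through the primary decomposition of $g$.

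First suppose $\beta = \alpha^2$ for some $\alpha \in \mathbb{F}_q^\times$.  Set $g_0 := \alpha^{-1} g$.  Since the scalar $\alpha^{-1} I$ has multiplier $\alpha^{-2} = \beta^{-1}$, we have $\mu(g_0) = 1$, i.e., $g_0 \in \mathrm{O}^\pm(2m,\mathbb{F}_q)$.  Apply Lemma~4.7 of \cite{SFV} to obtain $g_0 = h_1 k$ with $h_1, k$ orthogonal involutions and $\det(h_1) = (-1)^m$.  Put $h_2 := \alpha k$.  Because $\alpha$ is a central scalar it commutes with $h_1$, so $g = \alpha g_0 = h_1 (\alpha k) = h_1 h_2$, and a direct check gives $h_2^2 = \alpha^2 k^2 = \beta I$, $\mu(h_2) = \alpha^2 = \beta$, with the remaining conditions immediate.

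Now suppose $\beta$ is not a square.  Apply Theorem~A to produce $g = h_1 h_2$.  Since no linear $\tau$-stable factor $T \mp \sqrt\beta$ can appear in the minimal polynomial of $g$, every primary component $V_i$ has $\deg(p_i) \geq 2$ and every summand $U_j$ in the decomposition of $V$ used in the proof of Theorem~A has even dimension.  Compute $\det(h_1) = \prod_j \det(h_1|_{U_j})$ piece by piece.  In Case~I the explicit block-matrix calculation from \S 2.2 gives $\det(h_1|_{U_j}) = (-1)^{\dim U_j/2}$.  In Case~II-A with $U_j = \A v$ the map $h_1|_{U_j}$ is the involution $\tau$ of $\A$ transported via $a \mapsto av$, and $\det(\tau|_\A)$ can be evaluated via the decomposition $\A \otimes_{\mathbb{F}_q} \overline{\mathbb{F}_q} = \prod_\lambda \overline{\mathbb{F}_q}[T]/(T-\lambda)^{e_i}$ on which $\tau$ permutes factors by the $\tau$-pairing $\lambda \leftrightarrow \beta/\lambda$.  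In Case~II-B one has $\det(h_1|_{U_j}) = N_{\A/\mathbb{F}_q}(\gamma)$; using $\gamma\tau(\gamma) = 1$ together with Hilbert~90 for the residue-field extension $\bar\A/\bar\A^\tau$ (available whenever $\tau|_{\bar\A}$ is non-trivial) yields $N_{\A/\mathbb{F}_q}(\gamma) = 1$, which matches $(-1)^{\dim U_j/2}$ since such pieces have dimension a multiple of $4$.  Multiplying the piecewise contributions gives $\det(h_1) = (-1)^m$.

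The hardest step is the Case~II-A piece with $p_i(T) = T^2 - \beta$: here both roots $\pm\sqrt\beta$ are fixed individually by $\lambda \mapsto \beta/\lambda$, so $\tau|_{\bar\A}$ is the identity even though $\deg p_i = 2$, and the clean Hilbert~90 argument is unavailable.  One must compute $\det(\tau|_\A)$ directly by expanding $\tau(\pi) = -\pi + O(\pi^2)$ on a uniformizer $\pi$ of each local factor of $\A \otimes_{\mathbb{F}_q} \overline{\mathbb{F}_q}$ and tracking the leading terms to obtain the precise contribution; when it fails to match $(-1)^{\dim U_j/2}$, one must arrange a compensating sign flip $h_1|_{U_j} \to -h_1|_{U_j}$ on a summand of the right dimension (and a corresponding $h_2|_{U_j} \to -h_2|_{U_j}$ so that the relations $h_1^2 = I$, $h_2^2 = \beta I$, and $h_1 h_2 = g$ are preserved).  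Coordinating these compensations across pieces, so that the product of all sign corrections is exactly what is needed to reach $(-1)^m$, is the technical core of the argument.
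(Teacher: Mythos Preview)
Your treatment of the square case matches the paper exactly. For non-square $\beta$ your approach diverges from the paper's and has a real gap at the $T^2-\beta$ step. The proposed fix---replacing $h_1|_{U_j}$ by $-h_1|_{U_j}$ on selected summands---cannot adjust $\det(h_1)$ at all: when $\beta$ is a non-square every summand $U_j$ has even $\mathbb{F}_q$-dimension (Case~I pieces are $V_i\oplus V_{i'}$; in Case~II one checks that $\deg p_i$ is always even), so the sign flip multiplies the determinant by $(-1)^{\dim U_j}=1$. There is nothing to compensate with. Separately, your Hilbert~90 justification for Case~II-B presumes $\tau|_{\bar\A}$ is non-trivial, but for $p=T^2-\beta$ one has $\tau(T)=\beta T^{-1}\equiv T\pmod{T^2-\beta}$, so $\tau$ is the identity on the residue field and that argument does not apply (the conclusion $N_{\A/\mathbb{F}_q}(\gamma)=1$ happens to hold, e.g.\ because $1+\p$ is a $p$-group, but you have not shown it).

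The paper takes a different route in Case~II. It first invokes Shinoda's classification of conjugacy classes in $\mathrm{GO}^\pm(2m,\mathbb{F}_q)$ to pin down that Case~II-B occurs precisely when the minimal polynomial is $(T^2-\beta)^{2k-1}$. It then abandons the involution $t$ of \S\S4--5 altogether and uses an alternate explicit construction of $h_1$ from \cite{ryan-gsp,ryan-go}: one writes down $\pm1$-eigenspaces $P$ and $Q$ for $h_1$ in terms of the vectors $(g^i\pm\beta^ig^{-i})v$ (and their analogues on $\A x,\A y$ in II-B) and reads off $\dim Q$ directly, obtaining $\det(h_1)=(-1)^m$ on each piece without any matching or compensation. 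If you want to salvage your line of argument, the missing ingredient is Shinoda's result: it rules out the problematic II-A case ($p=T^2-\beta$ with odd exponent), after which your determinant computations for Case~I and for II-A with $p\neq T^2-\beta$ already give the right answer and no ``fix'' is needed.
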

\begin{proof} The case $\mu(g) = 1$ is implied by Lemma 4.7 of \cite{SFV}.  If $\mu(g) = \beta$ is a square in $\mathbb{F}_q$, say $\beta = \gamma^2$, then $g' = \gamma^{-1} g$ satisfies $\mu(g') = 1$, so we may write $g' = h_1 h'$ with $h_1$ and $h'$ orthogonal involutions, and $\det(h_1) = (-1)^m$.  Now let $h_2 = \gamma h'$, so that $g = h_1 h_2$ satisfies the desired conditions.  We now assume that $\mu(g) = \beta$ with $\beta$ a non-square in $\mathbb{F}_q$.

We proceed by considering Cases I, II-A, and II-B, as in the main result proved above.  In Case I, we have $V = V_1 \oplus V_2$, where $\mathrm{dim}(V_1) = \mathrm{dim}(V_2) = m$.  In this scenario, we have $E=F$ and $\epsilon = 1$, and in 2.2, the element $s_1$ satisfies $\det(s_1) = (-1)^m$ since $d_1$ is symmetric.  Taking $s_1 = h_1$ and $s_2 = h_2$ gives the desired factorization in this case.

To handle Case II, we appeal to the description of conjugacy classes in $\mathrm{GO}^{\pm}(2m, \mathbb{F}_q)$, as described by Shinoda in Section 1 of \cite{Shinoda}.  In particular, it is proven there that Case II-B occurs if and only if the minimal polynomial $p(T)^e$ of $g$ on $V$ is of the form $(T^2 - \beta)^e$ where $e = 2k-1$ is an odd positive integer.  This statement is contained in (1.18.2) of \cite{Shinoda}.  Note that Wonenburger makes mention of the parallel exceptional cases which occur for the $\beta = 1$ case in Remark I of \cite{Wo}.

We now apply some calculations made in \cite{ryan-gsp, ryan-go}.  Consider first Case II-A, where we have $V = \mathcal{A} v$ is cyclic, and the minimal polynomial for $g$ on $V$ is of the form $p(T)^e$, and which is not of the form $(T^2 - \beta)^{2k-1}$.  In particular, it follows from the fact that $\tau(p) = up$ for some $u \in F[T, T^{-1}]^{\times}$ that $p(T)$ has even degree, and let $2m = e \, \mathrm{deg}(p) = \mathrm{dim}(V)$.  Now define
$$ P = \mathrm{span} \{ (g^i + \beta^i g^{-i})v \, \mid \, 0 \leq i < m \}, \quad \text{ and } \quad Q = \mathrm{span} \{ (g^i - \beta^i g^{-i})v \, \mid \, 0 < i \leq m \}.$$
In Proposition 3(i) of \cite{ryan-gsp} and in Theorem 1 of \cite{ryan-go}, it is shown that $V = P \oplus Q$, and if we define $h_1$ to have $+1$-eigenspace $P$ and $-1$-eigenspace $Q$ and $h_2 = h_1 g$, then we have $h_1, h_2 \in G$ with $\mu(h_1) = 1$, $\mu(h_2) = \beta$, $h_1^2 = 1$, and $h_2^2 = \beta$.  Since $\mathrm{dim}(Q) = (-1)^m = \det(h_1)$, this gives the desired factorization.

Finally, consider Case II-B, where we have $V = \mathcal{A}x \oplus \mathcal{A}y$, and as mentioned above, the minimal polynomial for $g$ must be of the form $(T^2 - \beta)^{2k-1}$.  In this case, we have $\mathrm{dim}(V) = 2m$, where $m = 4k-2$.  Define
$$ P_x = \mathrm{span} \{ (g^i + \beta^i g^{-i})x \, \mid \, 0 \leq i \leq 2k-1 \} \quad \text{ and } \quad Q_x = \mathrm{span} \{ (g^i - \beta^i g^{-i})x \, \mid \, 0 < i < 2k-1 \},$$
and define $P_y$ and $Q_y$ analogously.  In Proposition 3(i) and (iii) of \cite{ryan-gsp} and in Theorem 1 of \cite{ryan-go}, it is shown that if $P = P_x \oplus Q_y$ and $Q = Q_x \oplus P_y$, and we define $h_1$ to have $+1$-eigenspace $P$ and $-1$-eigenspace $Q$ and $h_2 = h_1 g$, then we again have $h_1, h_2 \in G$ with $\mu(h_1) = 1$, $\mu(h_2) = \beta$, $h_1^2 = 1$, and $h_2^2 = \beta$.  Since $\mathrm{dim}(P_y) = 2k$ and $\mathrm{dim}(Q_x) = 2k-2$, then $\mathrm{dim}(Q) = 4k-2 = m$, so $\det(h_1) = (-1)^m$.  
\end{proof}

\section{Proof of Theorem~B}  \label{main result}

Recall that $h \in {\rm Aut}_F(V)$ is an anti-unitary involution and that $\ui g = \mu(g)^{-1} hgh^{-1}$ for $g \in \guv$. 
Thus $\iota$ is  a continuous automorphism of $\guv$ of order two. 
The restriction  $\iota |_{ \uv}$ gives the automorphism  $g \mapsto hgh^{-1}$ of $\uv$ 
which we again denote by $\iota$.  We restate our main result.

\begin{ThmB}  
The maps $\iota:\uv \to \uv$ and $\iota:\guv \to \guv$ are dualizing involutions. 
\end{ThmB}

We recall some character theory in \S\ref{chars}.  Using this, we will see in  \S\ref{conjugacy} that Theorem~B  follows almost immediately from
Theorem~A. 

\subsection{}    \label{chars}
Let $G$ be the $F$-points of a reductive algebraic $F$-group.  As usual, we write 
$C_c^\infty(G)$ for the space of complex-valued functions on $G$ that are locally constant and of compact support. 
Let $(\pi, V)$ be a smooth representation of $G$. For $f \in C_c^\infty(G)$, the operator $\pi(f):V \to V$ is given by 
\[
\pi(f)v  = \int_G f(g) \pi(g) v \, dg, \quad \quad v \in V, 
\]
where the integral is with respect to a Haar measure on $G$ which we fix once and for all. 
Assume now that $(\pi,V)$ is irreducible. It is well-known that $(\pi,V)$ is then
{\it admissible} \cite{Jac}, that is, the space $V^K$ of $K$-fixed vectors has finite dimension for any open subgroup $K$ of $G$.  It follows 
that the image of $\pi(f)$ has finite dimension and thus $\pi(f)$ has a well-defined trace. The resulting linear functional $f \mapsto \tr \, \pi(f): C_c^\infty(G) \to \cx$ is called the {\it distribution character} of $\pi$. It determines the irreducible representation $\pi$ up to equivalence (\cite{BZ} 2.20).

It is straightforward to check that $ \tr \, \pic(f) =  \tr \, \pi(\fc)$ where $\fc(g) = f(g^{-1})$ for $g \in G$.

Let $G_{\rm reg}$ denote the set of regular semisimple elements in $G$. By \cite{H-C, Ad-Kor}, 
the distribution character of $\pi$ is represented by a locally constant function  $\Theta_\pi$ 
on $G_{\rm reg}$ called the {\it character} of $\pi$. That is, 
\begin{equation} \label{H-C-char}
     \tr \, \pi(f)   = \int_G   f(g) \,\Theta_\pi (g)  \, dg, \quad \quad  f \in C_c^\infty(G). 
     \end{equation}

\begin{Rmk}     
Existence of $\Theta_\pi$ is established in \cite{H-C} for arbitrary connected reductive $F$-groups based on the submersion principle of its title.  
Harish-Chandra, however, only gave a proof of the principle in characteristic zero with a comment that a general proof was known. 
A full proof (due to G.~Prasad) appears in Appendix~B to \cite{Ad-DeB}. In \cite{Ad-Kor} \S13, Adler and Korman explain how to extend 
Harish-Chandra's and Prasad's arguments to non-connected reductive $F$-groups. 
\end{Rmk}

By (\ref{H-C-char}), the function $\Theta_\pi$ determines the distribution character of $\pi$ and thus $\pi$ is determined up to equivalence
by $\Theta_\pi$. In the same way,  $\Theta_\pi$ is constant on (regular semisimple) conjugacy classes. 
From $\tr \, \pic (f) = \tr \, \pi (f^\vee)$ for $f \in C_c^\infty(G)$, we also have 
$
\Theta_{\pi^{\vee}}(g) = \Theta_\pi(g^{-1})
$      
for $g \in G_{\rm reg}$, again by (\ref{H-C-char}).

\subsection{}    \label{conjugacy}
For  $\pi$ a smooth representation of $G$ and $\alpha$ a continuous automorphism of $G$, we write $\pia$ 
for the smooth representation given by $\pia(g) = \pi (\ua g)$ for $g \in G$.   

For any $g \in \guv$, we noted in the introduction that the elements $\ui g$ and $g^{-1}$ are conjugate by an element of $\uv$.  
To prove Theorem~B, it suffices therefore to observe the following. 
 \begin{Lem}
Let $\alpha$ be a continuous automorphism of $G$ such that $\ua g$ is conjugate to $g^{-1}$ for any $g \in G$. 
Then $\pia \cong \pic$ for any  irreducible smooth representation $\pi$ of $G$. 
\end{Lem}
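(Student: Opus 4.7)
The plan is to compare the Harish-Chandra characters of $\pi^\alpha$ and $\pi^\vee$ on the regular semisimple set $G_{\rm reg}$. By the recalled fact from \S\ref{chars} that $\Theta_\pi$ determines an irreducible admissible representation up to equivalence via (\ref{H-C-char}), it suffices to show $\Theta_{\pi^\alpha}(g) = \Theta_{\pi^\vee}(g)$ for every $g \in G_{\rm reg}$. Since $\alpha$ is a topological automorphism of $G$, it preserves the regular semisimple locus and the conjugacy class of each element, and $\pi^\alpha$ is again an irreducible smooth representation with a well-defined character.

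First I would establish the identity $\Theta_{\pi^\alpha}(g) = \Theta_\pi(\ua g)$ on $G_{\rm reg}$. Beginning from
\[
\pi^\alpha(f) \;=\; \int_G f(g)\,\pi(\ua g)\,dg
\]
for $f \in C_c^\infty(G)$, the substitution $h = \ua g$ exhibits $\pi^\alpha(f)$ as a positive scalar multiple of $\pi(f \circ \alpha^{-1})$, the scalar being the Jacobian $\lambda > 0$ by which $\alpha$ dilates the fixed Haar measure on $G$. Taking traces, applying (\ref{H-C-char}) to $\pi$, and performing a second change of variable inside the resulting integral introduces a compensating factor $\lambda^{-1}$; the two Jacobians cancel and one is left with
\[
\int_G f(g)\,\Theta_{\pi^\alpha}(g)\,dg \;=\; \int_G f(g)\,\Theta_\pi(\ua g)\,dg, \qquad \forall\,f \in C_c^\infty(G).
\]
Varying $f$ and invoking local constancy of both sides on $G_{\rm reg}$ yields the desired pointwise equality there.

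The remainder is then immediate. The hypothesis that $\ua g$ is conjugate in $G$ to $g^{-1}$ for each $g$, together with the conjugation invariance of $\Theta_\pi$, gives $\Theta_\pi(\ua g) = \Theta_\pi(g^{-1})$; combining this with the recalled identity $\Theta_{\pi^\vee}(g) = \Theta_\pi(g^{-1})$ from \S\ref{chars} closes the loop: $\Theta_{\pi^\alpha} = \Theta_{\pi^\vee}$ on $G_{\rm reg}$, whence $\pi^\alpha \cong \pi^\vee$. I expect no serious obstacle; the only point requiring care is the bookkeeping around the Jacobian $\lambda$ in the two changes of variable, but the scalar cancels and the check is routine.
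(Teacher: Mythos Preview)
Your argument is correct and follows the same character-theoretic route as the paper: compute $\Theta_{\pi^\alpha}(g)=\Theta_\pi({}^\alpha g)$, then use the hypothesis and conjugation invariance to match it with $\Theta_{\pi^\vee}(g)=\Theta_\pi(g^{-1})$.

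The one point of genuine difference is the treatment of Haar measure. The paper first proves, as a separate preliminary, that any continuous automorphism of $G$ actually preserves Haar measure (so the Jacobian is $1$): it takes a compact subgroup $K$ of maximal volume, notes $\mu_G(\gamma(K))=c_\gamma\,\mu_G(K)$ forces $c_\gamma\le 1$, and applies the same to $\gamma^{-1}$. You instead allow an arbitrary dilation factor $\lambda$ and observe that it appears once in the substitution $h={}^\alpha g$ and once with the reciprocal in the reverse substitution, so it cancels. Your bookkeeping is correct, and this shortcut is a legitimate simplification for the purposes of the lemma; what it forgoes is the paper's independently useful side fact that $\lambda=1$ for reductive $p$-adic groups.
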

 
 \begin{proof}
The main detail to check is that a continuous automorphism $\gamma$ of $G$ preserves the Haar measure $\mu_G$ on $G$.
We have $\mu_G \circ \gamma = c_\gamma \,\mu_G$ for some  $c_\gamma > 0$. 
Writing ${\rm Aut}_{\rm c}(G)$ for the group of continuous automorphisms of $G$ and $\mathbb{R}^\times_{\rm pos}$ 
for the multiplicative group of positive real numbers, the assignment
$\gamma \mapsto c_\gamma: {\rm Aut}_{\rm c}(G) \to \mathbb{R}^\times_{\rm pos}$ is a homomorphism.
Let $K$ be a compact subgroup of $G$ of maximal volume. (Note $K$ exists as $G$ has a finite non-zero number of 
conjugacy classes of maximal compact subgroups.) 
For any $\gamma \in {\rm Aut}_{\rm c}(G)$, we have $\mu_G(\gamma(K)) = c_\gamma \,\mu_G(K)$, so that 
$c_\gamma \leq 1$. Similarly $c_{\gamma^{-1}} = c_\gamma^{-1}  \leq 1$. Hence $c_\gamma = 1$, as required.

In particular,  $\alpha$ preserves the Haar measure on $G$. Thus, for any  irreducible smooth representation $\pi$ of $G$,  
\begin{align*} 
\pia (f)   &= \int_G f(g) \, \pi (\ua g) \,dg  \\
             &=  \int_G f({}^{\alpha^{-1}} g ) \, \pi (g) \, dg, \quad \quad  f \in C_c^\infty(G).  
             \end{align*} 
That is,  $\pia(f) = \pi(\ua f)$ for $f \in C_c^\infty(G)$ where $\ua f(g) = f({}^{\alpha^{-1}}g)$. It follows that  
$\tr \,\pia(f) = \tr \, \pi(\ua f)$, so that 
\begin{align*}
  \int_G f(g) \, \Theta_{\pia} (g) \,dg  &= \int_G  f({}^{\alpha^{-1}} g ) \, \Theta_\pi(g) \, dg \\  
                                                     &= \int_G f(g) \, \Theta_\pi (\ua g) \,dg,   \quad \forall \,\, f \in C_c^\infty(G).
                                                     \end{align*}
Therefore $ \Theta_{\pia} (g) = \Theta_\pi (\ua g)$ for $g \in G_{\rm reg}$. As characters are constant on conjugacy classes, it follows that 
$\Theta_{\pia}(g)  = \Theta_\pi(g^{-1})$ for $g \in G_{\rm reg}$. Thus $\Theta_{\pia} = \Theta_{\pic}$ and $\pia \cong \pic$. 
 \end{proof}

\subsection{}
We record a direct consequence of Theorem~B, well-known to experts (see, for example, \cite{DP} page 305). 
Suppose $E=F$ so that $\vform$ is orthogonal or symplectic.  We change notation slightly and write 
${\rm O}(V)$ and ${\rm GO}(V)$ or ${\rm Sp}_{2n}(F)$ and ${\rm GSp}_{2n}(F)$  
(where $\dim_F V = 2n$) for the resulting isometry and similitude groups. 
The center of each similitude group consists of scalar transformations. Dividing by this center gives the corresponding 
projective groups   ${\rm PGO} (V)$ and ${\rm PGSp}_{2n}(F)$.

\begin{Cor}
\begin{enumerate}[a.] 
\item
Every irreducible smooth representation of ${\rm O(V)}$  is self-dual.

\item
If $-1 \in (F^\times)^2$ then every  irreducible smooth representation of ${\rm Sp}_{2n}(F)$ is self-dual.

\item
For any irreducible smooth representation $\pi$ of ${\rm GO(V)}$ or ${\rm GSp}_{2n}(F)$, 
$\pic \cong \pi \otimes  \omega_\pi  \circ \mu^{-1} $ where $\omega_\pi$ denotes the central character of $\pi$.
In particular, every irreducible smooth representation of ${\rm PGO(V)}$ or ${\rm PGSp}_{2n}(F)$ is self-dual.

\end{enumerate}

\end{Cor}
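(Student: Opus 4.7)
The plan is, in each case, to show that the dualizing involution $\iota$ of Theorem~B is inner (possibly up to a central scalar), so that Theorem~B yields the claimed (twisted) self-duality directly. All three parts reduce to a single observation: the anti-unitary involution $h$ used to define $\iota$ already lies in the relevant classical or similitude group (perhaps after a scalar rescaling).

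For part (a), when $\ep = 1$ the anti-unitary condition $\langle hv, hv'\rangle = \langle v', v\rangle$ becomes $\langle hv, hv'\rangle = \langle v, v'\rangle$, so $h \in \mathrm{O}(V)$ and $\iota(g) = hgh^{-1}$ is the inner automorphism of $\mathrm{O}(V)$ by $h$. Hence $\pii \cong \pi$ (intertwined by $\pi(h)$) for every irreducible smooth $\pi$, and Theorem~B gives $\pic \cong \pi$. For part (b), $h$ is skew-symplectic with $\mu(h) = -1$; writing $-1 = c^2$ with $c \in F^\times$, the element $ch$ has multiplier $c^2 \mu(h) = 1$ and lies in $\mathrm{Sp}(V)$. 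Since conjugation by $h$ and by $ch$ coincide, the automorphism $\iota$ is inner on $\mathrm{Sp}(V)$ and the argument of (a) applies.

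For part (c), in both the orthogonal and symplectic cases $h$ already lies in the similitude group $\guv$, with $\mu(h) = 1$ or $-1$ respectively. Since $\mu(g)^{-1} \in F^\times$ sits in the center of $\guv$, Schur's lemma gives
\[
\pii(g) = \pi(\mu(g)^{-1})\,\pi(hgh^{-1}) = \ompi(\mu(g))^{-1}\,\pi(hgh^{-1}),
\]
and the inner conjugation $g \mapsto hgh^{-1}$ is intertwined by $\pi(h)$; hence as representations $\pii \cong \pi \otimes (\ompi \circ \mu^{-1})$. Combining with Theorem~B gives $\pic \cong \pi \otimes \ompi \circ \mu^{-1}$. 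An irreducible smooth representation of $\mathrm{PGO}(V)$ or $\mathrm{PGSp}_{2n}(F)$ inflates to an irreducible smooth representation of the corresponding similitude group whose central character $\ompi$ is trivial, so the twisting character $\ompi \circ \mu^{-1}$ is also trivial and $\pic \cong \pi$.

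The main (and only) nontrivial point is part (b): without the hypothesis $-1 \in (F^\times)^2$, the skew-symplectic $h$ cannot be rescaled into $\mathrm{Sp}(V)$, so the inner-automorphism trick genuinely requires that hypothesis. Beyond this, the proof is a short piece of bookkeeping with central characters on top of Theorem~B.
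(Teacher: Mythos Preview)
Your proof is correct and follows essentially the same approach as the paper: in each case you show that the dualizing involution $\iota$ of Theorem~B differs from an inner automorphism only by the central factor $\mu(g)^{-1}$, and then read off the (twisted) self-duality. The paper's part~(c) is phrased slightly differently---it observes that $g \mapsto \mu(g)^{-1} g$ is itself a dualizing involution---but this is just a repackaging of your computation, since that map differs from $\iota$ by conjugation by $h \in \guv$.
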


\begin{proof}
Part~a is immediate as $h \in {\rm O}(V)$,  so $\iota:{\rm O}(V) \to {\rm O}(V)$ is inner. 

For part~b, it suffices to note that 
$\iota(g) = h g h^{-1}$ defines an inner automorphism  of ${\rm Sp}_{2n}(F)$ for any 
anti-unitary (i.e., skew-symplectic) $h \in {\rm GSp}_{2n}(F)$.  Given $i \in F^\times$ with $i^2 = -1$,  
the homothety $i$ satisfies $\mu(i) = i^2 = -1$ and thus  $i h \in {\rm Sp}_{2n}(F)$.   
Since $\ui g = (i h) g (i h)^{-1}$ for $g \in {\rm Sp}_{2n}(F)$, we see that $\iota$ is inner.  

For part~c, observe that  $g \mapsto \mu(g)^{-1} g$ defines a dualizing involution of  each similitude group.
\end{proof}

\section{Dualizing involutions do not always exist}  \label{non-existence}

Let $D$ be a central $F$-division algebra of dimension $m^2$ over $F$. Let $n$ be a positive integer and set $G = {\rm GL}_n(D)$.
 We show that $G$ can admit an automorphism that takes each irreducible smooth representation to its dual only in the known cases
$m=1$ \cite{GK, Tupan} and $m=2$ \cite{MS, Ragh}.  Thus it is only in these two cases that
$G$ can admit an automorphism $\theta$ such that $\ut g$ is conjugate to $g^{-1}$ for all $g \in G$, an observation also made by 
Lin, Sun and Tan  (\cite{LinSunTan} Remark (c) page 83). In fact, the two statements 
-- non-existence of automorphisms that take each irreducible smooth representation to its dual and non-existence of automorphisms that invert each conjugacy class -- must surely be equivalent. 

\begin{Prop}
Suppose there exists an automorphism $\theta$ of $G$ such that $\pit \simeq \pic$ for all irreducible smooth representations $\pi$ of $G$. 
Then $D = F$ or $D$ is a quaternion algebra over $F$ (equivalently,  $m = 1$ or $2$).  
\end{Prop}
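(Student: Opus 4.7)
The plan is to use central character considerations to pin down the action of $\theta$ on the center $Z(G) = F^\times$, then to invoke a structural fact about central simple algebras. Write $Z = Z(G) = \{zI_n : z \in F^\times\}$, identified with $F^\times$. Since $\theta$ is a continuous automorphism of $G$ it preserves $Z$, inducing a continuous automorphism $\alpha$ of $F^\times$. For any irreducible smooth representation $\pi$ of $G$ we have $\omega_{\pit}(z) = \omega_\pi(\alpha(z))$ and $\omega_{\pic}(z) = \omega_\pi(z)^{-1}$, so the hypothesis $\pit \cong \pic$ forces
\[
\omega_\pi \circ \alpha = \omega_\pi^{-1}
\]
for every central character $\omega_\pi$ of an irreducible smooth representation of $G$.

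Next I would verify that \emph{every} continuous character $\omega$ of $F^\times$ is realized this way. Since $Z(D^\times) = F^\times$, every continuous character of $F^\times$ arises as the central character of some irreducible smooth representation $\sigma$ of $D^\times$ (even as a one-dimensional character via the reduced norm, after pulling back any character of $F^\times$). Parabolically inducing $\sigma \boxtimes \mathbf{1} \boxtimes \cdots \boxtimes \mathbf{1}$ from the standard minimal parabolic with Levi $(D^\times)^n$ in $G$ gives a finite-length representation of $G$ whose every irreducible subquotient has central character $\omega$. Combining with the previous paragraph and using that continuous characters separate points of the locally compact abelian group $F^\times$ (Pontryagin duality), the identity $\omega \circ \alpha = \omega^{-1}$ for all $\omega$ forces
\[
\alpha(z) = z^{-1} \quad (\forall\, z \in F^\times).
\]

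The final step is structural: the map $\sigma: g \mapsto \theta(g)^{-1}$ is a continuous $F$-linear anti-automorphism of $G = \mathrm{GL}_n(D)$ that fixes $Z$ pointwise (since $\alpha(z)^{-1} = z$), and it extends $F$-linearly (using the density of $\mathrm{GL}_n(D)$ as an open subset of $M_n(D)$, together with continuity) to an $F$-anti-involution of the ambient central simple algebra $M_n(D)$. By Albert's theorem, such an anti-involution of the first kind exists if and only if the Brauer class of $M_n(D)$, equivalently of $D$, has order dividing $2$; and over the local field $F$ the order of $[D] \in \mathrm{Br}(F)$ equals the degree $m$, so $m \in \{1, 2\}$, meaning $D = F$ or $D$ is a quaternion algebra over $F$, as claimed. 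The main obstacle is this final structural step: rigorously extending the group-theoretic anti-automorphism $\sigma$ on $\mathrm{GL}_n(D)$ to an $F$-algebra anti-involution of the full matrix algebra $M_n(D)$ requires a careful density-and-continuity argument to propagate anti-multiplicativity and $F$-linearity from invertible elements to all of $M_n(D)$; everything else is essentially Pontryagin duality.
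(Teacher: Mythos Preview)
Your central-character reduction showing $\theta|_{Z} = (\,\cdot\,)^{-1}$ is fine and matches the paper. The gap is in the final step: you cannot pass from the group anti-automorphism $\sigma(g)=\theta(g)^{-1}$ of $\mathrm{GL}_n(D)$ to an $F$-algebra anti-automorphism of $M_n(D)$ by ``density and continuity'' alone. Fixing the center pointwise is far weaker than additivity, and additivity is exactly what you need and do not have. Concretely, for $D=F$ and $n=2$, take a nontrivial continuous character $\psi:F^\times\to\{\pm 1\}$ trivial on squares and set $\theta(g)=\psi(\det g)\,{}^{\top}g^{-1}$. Then $\theta$ is a continuous automorphism of $\mathrm{GL}_2(F)$ with $\theta(zI)=z^{-1}I$, yet $\sigma(g)=\psi(\det g)\,{}^{\top}g$ is visibly not additive, so no continuous extension to $M_2(F)$ can be a ring map. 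This ``radial'' twist by a homomorphism $G\to Z(G)$ is a genuine feature of $\mathrm{Aut}\,\mathrm{GL}_n(D)$ and is precisely what Dieudonn\'e's classification records; your argument gives no mechanism to strip it off. (You might object that this particular $\theta$ does not satisfy $\pi^\theta\cong\pi^\vee$ for all $\pi$; true, but your extension step only used $\theta|_Z=(\cdot)^{-1}$, so as written it must apply to such $\theta$ as well.)

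The paper confronts this head-on. It invokes Dieudonn\'e's theorem to write $\theta(g)=\eta(g)\,h\,{}^{\sigma}g\,h^{-1}$ (in the case $D\not\cong D^{\mathrm{op}}$), then evaluates on $F^\times$ and applies the normalized valuation to the identity $a^{-1}=\eta_1(a)^{mn}\,{}^{\sigma}a$, using continuity of field automorphisms of $F$ to conclude $m\mid 2$. This disposes of the radial factor $\eta$ arithmetically rather than by any extension argument. There is a further subtlety your Albert-theorem endgame misses: once one knows $D\cong D^{\mathrm{op}}$, the isomorphism need only be a ring isomorphism, not an $F$-algebra isomorphism (because $F$ may have nontrivial continuous automorphisms). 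The paper handles this with a Hasse-invariant computation in $\mathfrak{O}_D/\mathfrak{q}_D$ showing $r=m-r$, hence $m=2$. To repair your approach you would essentially need Dieudonn\'e's theorem to control the shape of $\theta$, at which point you are back to the paper's proof.
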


\subsection{} \label{conts}
We need a preliminary observation. 

Let $\fo$ denote  the valuation ring in $F$ and $\fp$ the unique maximal ideal in $\fo$.

\begin{Lem}   \label{prelim}
Any  field automorphism of $F$ preserves $\fp$. In particular, field automorphisms of $F$ are automatically continuous.
\end{Lem}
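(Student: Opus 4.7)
The plan is to give a purely algebraic characterization of the multiplicative coset $1 + \fp$ in terms of field operations and $n$-th powers; once this is in hand, invariance under any field automorphism is immediate, preservation of $\fp$ follows by subtracting $1$, and continuity follows because the powers $\fp^k$ form a neighborhood base of $0$.

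Let $p$ denote the residue characteristic of $F$ -- that is, $p = \text{char}(F)$ if $\text{char}(F) > 0$, and otherwise $p$ is the unique rational prime lying in $\fp$. The key claim I would establish is
\[
1 + \fp \;=\; \bigcap_{\substack{n \geq 1 \\ \gcd(n,p)=1}} (F^\times)^n.
\]
For $\subseteq$: given $u \in 1 + \fp$ and $n$ coprime to $p$, I would apply Hensel's lemma to $f(y) = y^n - u$ at the approximate root $y = 1$ (the derivative $n$ is a unit of $\fo$), producing an $n$-th root of $u$ in $1 + \fp$. For $\supseteq$: suppose $x$ lies in the intersection. Writing $v$ for the normalized valuation, the existence of $n$-th roots of $x$ forces $n \mid v(x)$ for every $n$ coprime to $p$; since a fixed nonzero integer has only finitely many divisors, this forces $v(x) = 0$ and hence $x \in \fo^\times$. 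The residue $\overline{x} \in \mathbb{F}_q^\times$ is then an $n$-th power for each such $n$, and taking $n = q-1$ -- coprime to $p$ because $q$ is a power of $p$ -- collapses $\overline{x}$ to $1$, so that $x \in 1 + \fp$.

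The lemma is then a short deduction. Any field automorphism $\sigma$ of $F$ fixes the prime subfield pointwise, and in particular fixes the integer $p$; and $\sigma$ trivially permutes $(F^\times)^n$ for each $n$. Hence $\sigma$ preserves the right-hand side of the displayed identity, so $\sigma(1 + \fp) = 1 + \fp$, and subtracting $1$ yields $\sigma(\fp) = \fp$. Iterating, $\sigma(\fp^k) = \fp^k$ for all $k \geq 1$, so $\sigma$ sends a neighborhood base of $0$ to itself; being additive, $\sigma$ is then continuous on all of $F$.

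I do not anticipate a serious obstacle. The one detail to keep track of is that the exponent $n = q - 1$ used in the residue-field step is coprime to $p$, which is automatic from $q = p^f$; this is what lets the $\supseteq$ argument close uniformly in both characteristic $0$ and characteristic $p$.
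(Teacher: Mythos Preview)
Your proposal is correct and follows essentially the same route as the paper: both proofs characterize $1+\fp$ algebraically as the set of elements admitting $n$-th roots for every $n$ coprime to $p$, prove the two inclusions via Hensel's lemma and the valuation-plus-$(q-1)$-th-power argument respectively, and then observe that this description is preserved by any field automorphism. The only cosmetic difference is that you write the characterization as the intersection $\bigcap_{\gcd(n,p)=1}(F^\times)^n$, whereas the paper phrases it elementwise.
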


\begin{proof}

Write $q$ for the cardinality of the residue field
$\fo / \fp$ and $v_F$ for the normalized valuation on $F$.
The ideals $\fp^k$ (for $k$ a positive integer) form a neighborhood basis of $0 \in F$. 
Thus an automorphism that preserves $\fp$ is continuous. 

Writing $p$ for the residual characteristic of $F$, the set $1+\fp$  can be characterized algebraically as follows:   
\begin{itemize}
\item[]
$x \in 1+\fp$ if and only if $x$ admits an $n$-th root (i.e., there is a $y \in F^\times$ with $y^n = x$) for any $n$ such that $p \nmid n$ . 
\end{itemize}
Indeed, using Hensel's Lemma or simply that  $1+\fp$ is a pro-$p$-group, one sees  
that each element of $1+\fp$ admits an $n$-th root for any $n$ such that $p \nmid n$. 
In the other direction, suppose $x$ has this property. 
Then $n$ divides $v_F(x)$  for infinitely many integers $n$, whence $v_F(x) = 0$, i.e., $x \in \fo^\times$. 
Let $y$ be a $(q-1)$-th root of $x$.   Then $y \in \fo^\times$ and the relation  $y^{q-1} =x$ implies $x \in 1+\fp$. 

It follows that any field automorphism of $F$ preserves $1+\fp$ and so also $\fp$. 
\end{proof}

\subsection{}
{\it Proof of Proposition.}
We use the isomorphism  $x \mapsto x 1_n:F^\times \to Z(G)$
to view the central character $\ompi$ of any smooth irreducible representation $\pi$ of $G$ as a smooth character of $F^\times$.

Suppose first that $D$ is not isomorphic to its opposite $\dop$. We appeal to  Dieudonn\'{e}'s description of the automorphism groups 
of general linear groups over division algebras \cite{D}. In the case at hand, this gives a) a homomorphism $\eta:G \to F^\times$, 
b) an automorphism $\sigma$ of $D$ acting on $G$ via ${}^{\sigma}(a_{ij}) = ({}^{\sigma}a_{ij})$ and c) an element $h \in G$ such that 
\begin{equation} \label{aut}
 \ut g = \eta(g) \, h   \, {}^{\sigma}g \, h^{-1},    \quad \,\,\,  g \in G. 
 \end{equation} 
(See \cite{D} Theorems 1 and 3 for the case $n \geq 3$ and the end of \emph{ibid.} \S12 for the case $n=2$.) 

As $\pit \simeq \pic$, we have $\ompi \circ \theta= \ompi^{-1}$ (for all smooth irreducible representations $\pi$). It follows that 
\[
     \ut a = a^{-1}, \quad \,\,\,  a \in F^\times.
\]
Thus, by (\ref{aut}),
\[
    a^{-1}  = \eta(a) \, {}^{\sigma}a, \quad \,\,\,   a \in F^\times.
     \]
 
We have $G/(G,G) \simeq \dx / (\dx,\dx)$ via  Dieudonn\'{e}'s non-commutative determinant $\text{Det}$. 
Further, the reduced norm  $\nrd$ from $D$  to $F$ induces an isomorphism $\dx / (\dx,\dx) \simeq \fx$. Thus there is 
a character $\eta_1:\fx \to \fx$ such that $\eta(g) = \eta_1(\nrd \circ \text{Det} \,g )$, for $g \in G$. 
Using $\text{Det} \, a = a^n (\dx,\dx)$ and $\nrd \,a = a^m$, for $a \in \fx$, it follows that      
\[
            a^{-1}  = \eta_1(a)^{mn} \, {}^{\sigma}a, \quad \,\,\,  a \in F^\times.
\]
Taking $a= \varpi$,  a uniformizer in $F$,  and applying  $v_F$, we obtain
\[
    -1 = mn v_F(\eta_1(\varpi)) + v_F({}^{\sigma}\varpi)
    \]
By Lemma~\ref{conts}, $ v_F({}^{\sigma}\varpi) = 1$,  and hence $m \mid 2$.  Thus $D = F$ or $D$ is a quaternion algebra over $F$ which 
contradicts our assumption that $D$ is not isomorphic to $\dop$. 

It follows that there is an isomorphism $\alpha:D \to \dop$. If $\alpha$ is $F$-linear, then $D$
represents an element of order at most two in the Brauer group of $F$. As the only such elements are the class of $F$ and the class of the unique quaternion division algebra over $F$, the result follows. In general, however, we can only say that $\alpha$ preserves the center $F$ of $D$.  
By Lemma~\ref{prelim}, it must also preserve $\fo$. The ring $D$ contains a unique maximal $\fo$-order $\od$ consisting of 
the elements of $D$ that are integral over $\fo$. From this description, we see that $\alpha$ preserves $\od$. Thus $\alpha$ also 
preserves the unique maximal (left or right) ideal $\pd$ in $\od$, and hence induces an automorphism of the quotient $\od / \pd$, 
a finite field of order $q^m$.  Let $\varpi_D$ be a generator of $\pd$, i.e.,  
$\pd = \varpi_D \, \od = \od \, \varpi_D$.  Then, for $D \neq F$, there is a unique integer $r$ with $1 < r < m$ and $(r,m) = 1$ such that 
\begin{equation} \label{Hasse} 
           \varpi_D \, x \, \varpi_D{}^{-1}  \equiv x^{q^r} \pmod{\pd}, \quad \,\,\,  x \in \od.
\end{equation}
Moreover the congruence is  independent of the choice of generator $\varpi_D$.  (This all follows,  for example, from \cite{Re} 14.5.) 
Applying $\alpha$ to (\ref{Hasse}) and rearranging  (and using the fact that $\od / \pd$ has order $q^m$), we obtain
\[
  \alpha(\varpi_D) \, x \, \alpha(\varpi_D)^{-1}  \equiv x^{q^{m-r}}  \pmod{\pd}, \quad \,\,\,  x \in \od.
\] 
Since (\ref{Hasse}) holds for all generators of $\pd$, we deduce that $r = m-r$ or $2r = m$, whence $r=1$ and $m=2$.  Thus
$D$ is a quaternion algebra over $F$ and we have completed the proof. 
\qed


\begin{thebibliography}{99}

\bibitem{Ad}
 J. Adams, {\it The real Chevalley involution.} Compositio Math. 150 (2014), 2127-2142.




\bibitem{Ad-DeB}
J. Adler and S. DeBacker, {\it Murnaghan-Kirillov theory for supercuspidal representations of tame general linear groups, with appendices by G. Prasad and R. Huntsinger.} J. Reine Angew. Math. 575 (2004), 1-35.

\bibitem{Ad-Kor}
 J. Adler and J. Korman, {\it  The local character expansion near a tame, semisimple element}. Amer. J. Math. 129 (2007), no. 2, 381-403.

 \bibitem{BZ}
J. Bernstein and A. Zelevinsky, {\it Representations of the group $\mathrm{GL}(n,F)$ where $F$ is a non-Archimedean local field.} Russian Math. Surveys 31:3 (1976), 1-68.


 \bibitem{D}
  J. Dieudonn\'{e}, {\em On the automorphisms of the classical groups.} 
  Mem. Amer. Math. Soc. (2). Corrected reprint of the 1951 original.
   AMS,  Providence, R.I., 1980. viii+123 pp.
 


\bibitem{EN}
  E.~W.  Ellers and W. Nolte, {\it Bireflectionality of orthogonal and symplectic groups.} Arch. Math., Vol. 39 (1982), 113-118.

 \bibitem{GK}
     I.~M. Gelfand and D.~A. Kazhdan, {\em Representations of the group 
   $GL(n,K)$ where $K$ is a local field.} Lie groups and their 
    representations (Proc. Summer School, Bolyai J\'{a}nos Math. Soc.,
     Budapest, 1971) pp. 95-118. Halsted, New York, 1975.             

\bibitem{Gow}
R. Gow, {\it Products of two involutions in classical groups of characteristic 2.}  J. Algebra 71 (1981), no. 2, 583-591. 

\bibitem{GowVin}
R. Gow and C.~R. Vinroot, {\it Extending real-valued characters of finite general linear and unitary groups on elements related to regular unipotents.}  J. Group Theory 11 (2008), no. 3, 299-331.

\bibitem{H-C}
Harish-Chandra, {\it A submersion principle and its applications.} Geometry and Analysis: Papers Dedicated to the Memory of 
V. K. Patodi, Indian Academy of Sciences, Bangalore, 1980, pp. 95-102.


\bibitem{Jac}
 H. Jacquet, {\it Sur les repr\'{e}sentations des groupes r\'{e}ductifs $p$-adiques.}
 C. R. Acad. Sci. Paris S\'{e}r. A-B 280 (1975), Aii, A1271-A1272.
 
 
\bibitem{Kap}
 I. Kaplansky, {\it Linear Algebra and Geometry: a second course.}  2nd. edition. Chelsea Publishing Company, New York, 1974.

\bibitem{LinSunTan}
 Y. Lin, B. Sun and S. Tan, {\it MVW-extensions of quaternionic classical groups.} Math. Z. 277 (2014), 81-89.

\bibitem{MoViWa87}
C. M{\oe}glin, M.-F. Vign{\'e}ras, and J.-L. Waldspurger, {\it Correspondences de Howe sur un corps $p$-adique.}
Lecture Notes in Mathematics, 1291. Springer-Verlag, Berlin, 1987.

\bibitem{MS}
G. Mui\'{c} and G. Savin,  {\it Complementary series for Hermitian quaternionic groups}, 
    Canad. Math. Bull.  43  (2000),  no. 1, 90--99. 

\bibitem{DP}
D. Prasad, {\it On the self-dual representations of finite groups of Lie type}, J. Algebra 210 (1998), no. 1, 298--310. 

\bibitem{Ragh}
 A. Raghuram,  {\it On representations of $p$-adic ${\rm GL}_2(D)$}, 
      Pacific J. Math. 206 (2002), no. 2, 451--464.
              
\bibitem{Re}
     I. Reiner, {\em Maximal orders.} Corrected reprint of the 1975 original. With a foreword by M. J. Taylor. 
     London Math. Soc. Monographs. New Series, 28. Oxford University Press, Oxford, 2003.

\bibitem{RV}
 A. Roche and C.~R. Vinroot, {\it Dualizing involutions for classical and similitude groups over local non-archimedean fields.} In preparation. 
 
\bibitem{SFV}
A.~A. Schaeffer Fry and C.~R. Vinroot, {\it Real classes of finite special unitary groups.}  J. Group Theory, to appear. 

\bibitem{Shinoda}
K.-I. Shinoda, {\it The characters of Weil representations associated to finite fields.}  J. Algebra 66 (1980), no.~1, 251-280.
  


\bibitem{Tupan}
A. Tupan, {\it A triangulation of ${\rm GL}(n, F )$.} Represent. Theory 10 (2006), 158-163.

   
\bibitem{ryan-gsp}
C.~R. Vinroot, {\em A factorization in ${\rm GSp}(V)$.} Linear and Multilinear Algebra 52 (2004), no.~6, 385-403. 
   
\bibitem{ryan-go}
C.~R. Vinroot, {\em A note on orthogonal similitude groups.} Linear and Multilinear Algebra 54 (2006), no.~6, 391-396.    


  
\bibitem{Wo}
 M.~J. Wonenburger, {\em Transformations which are products of two involutions.} 
  J. Math. Mech. 16,  1966,  327-338. 



\end{thebibliography}
\end{document}